\definecolor{darkblue}{rgb}{0,0,0.5}
\newcommand{\executeiffilenewer}[3]{%
  \ifnum\pdfstrcmp{\pdffilemoddate{#1}}%
  {\pdffilemoddate{#2}}>0%
  {\immediate\write18{#3}}\fi%
}
\newcommand{%
  \executeiffilenewer{.svg}{.pdf}%
  {inkscape -z -D --file=.svg %
    --export-pdf=.pdf --export-latex}%
  \input{.pdf_tex}%
  }[1]{%
  \executeiffilenewer{#1.svg}{#1.pdf}%
  {inkscape -z -D --file=#1.svg %
    --export-pdf=#1.pdf --export-latex}%
  \input{#1.pdf_tex}%
  }
\numberwithin{equation}{section}
\def\PP{\mathbb{P}}
\def\QQ{\mathbb{Q}}
\def\RR{\mathbb{R}}
\def\ZZ{\mathbb{Z}}
\def\EE{\mathbb{E}}
\def\11{\mathbbm{1}}
\def\E{\mathbb{E}}
\def\P{\mathbb{P}}
\def\Q{\mathbb{Q}}
\def\d{\partial}
\def\Z{\mathbb{Z}}
\newtheorem{thm}{Theorem}[section]
\newtheorem{cor}[thm]{Corollary}
\newtheorem{prop}[thm]{Proposition}
\theoremstyle{remark}
\newtheorem{rem}{Remark}
\newcommand{\vertiii}[1]{{\left\vert\kern-0.25ex\left\vert\kern-0.25ex\left\vert #1 
    \right\vert\kern-0.25ex\right\vert\kern-0.25ex\right\vert}}
    \def\restriction#1#2{\mathchoice
                  {\setbox1\hbox{${\displaystyle #1}_{\scriptstyle #2}$}
                  \restrictionaux{#1}{#2}}
                  {\setbox1\hbox{${\textstyle #1}_{\scriptstyle #2}$}
                  \restrictionaux{#1}{#2}}
                  {\setbox1\hbox{${\scriptstyle #1}_{\scriptscriptstyle #2}$}
                  \restrictionaux{#1}{#2}}
                  {\setbox1\hbox{${\scriptscriptstyle #1}_{\scriptscriptstyle #2}$}
                  \restrictionaux{#1}{#2}}}
    \def\restrictionaux#1#2{{#1\,\smash{\vrule height .8\ht1 depth .85\dp1}}_{\,#2}}
\begin{document}

\title{Quasi-limiting estimates for periodic absorbed Markov chains}

\author{Nicolas Champagnat$^{1}$, Denis Villemonais$^{1}$}

\footnotetext[1]{Universit\'e de Lorraine, CNRS, Inria, IECL, F-54000 Nancy, France \\
  E-mail: Nicolas.Champagnat@inria.fr, Denis.Villemonais@univ-lorraine.fr}

\maketitle

\begin{abstract}
  We consider periodic Markov chains with absorption. Applying to iterates of this periodic Markov chain criteria for the exponential
  convergence of conditional distributions of aperiodic absorbed Markov chains, we obtain exponential estimates for the periodic
  asymptotic behavior of the semigroup of the Markov chain. This implies in particular the exponential convergence in total variation
  of the conditional distribution of the Markov chain given non-absorption to a periodic sequence of limit measures and we
  characterize the cases where this sequence is constant, which corresponds to the cases where the conditional distributions converge
  to a quasi-stationary distribution. We also characterize the first two eignevalues of the semigroup and give a bound for the
  spectral gap between these eigenvalues and the next ones. Finally, we give ergodicity estimates in total variation for the Markov
  chain conditioned to never be absorbed, often called $Q$-process, and quasi-ergodicity estimates for the original Markov chain.
\end{abstract}

\noindent\textit{Keywords:} Markov chains with absorption; periodic Markov chains; quasi-sta\-tio\-na\-ry distribution; mixing property;
exponential forgetting; $Q$-process; quasi-ergodicity.

\medskip\noindent\textit{2010 Mathematics Subject Classification.} 
37A25, 60B10, 60F99, 60J05.


\section{Introduction}
\label{sec:intro}

Let $(X_t,t\in \ZZ_+)$ be a Markov chain in $E\cup\{\d\}$ where $E$ is a measurable space, $\d\not\in E$ and $\ZZ_+:=\{0,1,\ldots\}$. For all $x\in E\cup\{\d\}$, we denote as usual by $\PP_x$ the law of $X$ given $X_0=x$ and for any probability measure $\mu$ on $E\cup\{\d\}$, we define $\PP_\mu=\int_{E\cup\{\d\}}\PP_x\,\mu(dx)$. We also denote by $\E_x$ and $\E_\mu$ the associated expectations. We assume that $\d$ is absorbing, which means that $X_t=\d$ for all $t\geq \tau_\d$, $\PP_x$-almost surely, where 
\[ 
\tau_\d=\inf\left\{t\in I,\,X_t=\d\right\}.  
\]
We study the sub-Markovian transition semigroup of $X$ in $E$, $(P_n)_{n\in\Z_+}$, defined as
\begin{align*}
P_n f(x)=\E_x\left(f(X_n)\11_{n<\tau_\d}\right),\ \forall n\in\Z_+,
\end{align*}
for all bounded or nonnegative measurable function $f$ on $E$ and all $x\in E$. We also define as usual the left-action of $P_n$ on
measures as
\[
\mu P_n f=\EE_\mu\left(f(X_n)\11_{n<\tau_\d}\right)=\int_E P_nf(x)\,\mu(dx),
\]
for all positive measure $\mu$ on $E$ and all bounded measurable $f$.

Many references (see for
example~\cite{GongQianEtAl1988,ChampagnatVillemonais2016b,ChampagnatVillemonais2017b,ChampagnatVillemonais2019,FerreRoussetStoltz2018,HinrichKolbEtAl2018,BansayeCloezEtAl2019,GuillinNectouxEtAl2020,GuillinNectouxEtAl2021,LelievreRamilEtAl2021,BenaimChampagnatEtAl2021})
provide criteria allowing to characterize the asymptotic behavior of the semigroup $(P_n)_{n\in\ZZ_+}$ in the following form: there
exist a measurable function $V:E\rightarrow[1,+\infty)$, constants $\theta_0\in (0,1]$, $\alpha\in(0,1)$ and $C\in\mathbb{R}_+$, a
probability measure $\nu_\text{QS}$ on $E$ such that $\nu_\text{QS}(V)<+\infty$ and a measurable function $\eta:E\rightarrow\mathbb{R}_+$
non-identically zero such that $\eta/V$ is bounded and $\nu_\text{QS}(\eta)>0$, such that
\begin{equation}
  \label{eq:extension-eta}
  \left|\theta_0^{-n} P_n f(x)-\eta(x)\nu_\text{QS}(f)\right|\leq C\alpha^n V(x)
\end{equation}
for all measurable function $f:E\rightarrow\mathbb{R}$ such that $|f|\leq V$, all $n\in\mathbb{Z}_+$ and all $x\in E$.

All the previously cited references assume some property of aperiodicity for the process $X$, and~\eqref{eq:extension-eta} itself
implies a weak form of aperiodicity that can be formulated as follows: for all $x\in E$ such that $\eta(x)>0$ and all measurable
$A\subset E$ such that $\nu_\text{QS}(A)>0$, $P_n\mathbbm{1}_A(x)>0$ for all $n$ large enough. The purpose of this note is to explain
how~\eqref{eq:extension-eta} should be modified for periodic Markov chains, and to examine the implications of this modified property
for quasi-limiting estimates, spectral properties of the semigroup, ergodicity of the $Q$-process and quasi-ergodicity of the Markov
chain $X$. In the rest of this introduction, we shall recall all these properties when~\eqref{eq:extension-eta} holds true.


The property~\eqref{eq:extension-eta} implies that, for all $x\in E$ such that $\eta(x)>0$ and all measurable $A\subset E$,
\[
\left|\frac{P_n \mathbbm{1}_A(x)}{P_n\mathbbm{1}_E(x)}-\nu_\text{QS}(A)\right|\xrightarrow[n\rightarrow+\infty]{} 0,
\]
which means that $\nu_\text{QS}$ is a \emph{quasi-limiting distribution} for the process $X$ (see e.g.~\cite{MeleardVillemonais2012}), and
thus a \emph{quasi-stationary distribution} (QSD) of $X$ (see again ~\cite{MeleardVillemonais2012}), i.e.\ a probability measure
$\nu$ such that
\[
\PP_\nu(X_t\in\cdot\mid t<\tau_\partial)=\nu(\cdot),\quad\forall t\in \Z_+.
\]
In addition, the \emph{absorption rate} of the QSD $\nu_\text{QS}$ is $\theta_0$, i.e.\ $\mathbb{P}_{\nu_\text{QS}}(\tau_\d\geq n)=\theta_0^n$
(see again~\cite{MeleardVillemonais2012}), and $\nu_\text{QS}(\eta)=1$. It is also easy to see that the measure $\nu_\text{QS}$ is the unique
QSD satisfying $\nu_\text{QS}(\eta)>0$ and $\nu_\text{QS}(V)<+\infty$ and that the function $\eta$ is an eigenfunction of the semigroup, since
$P_n\eta=\theta_0^n\eta$ for all $n\geq 0$. After giving the setting of our results on periodic processes and basic properties of
their QSDs in Section~\ref{sec:QSD}, the quasi-limiting properties recalled above are extended to the periodic case in
Section~\ref{sec:QLD} below. Of course, exponential convergence of the conditional distributions does not hold in general because of
the periodicity of the process~\cite{FerrariKestenMartinez1996,DoornPollett2008,DoornPollett2009,Ocafrain2018}, yet we are able to
obtain periodic exponential estimates in total variation.

The property~\eqref{eq:extension-eta} also has consequences on the spectrum of the semigroup $(P_n)_{n\geq 0}$: given a nonzero
function $h:E\cup\{\d\}\rightarrow\mathbb{C}$ such that $h/V$ is bounded and $\lambda h(x)=\mathbb{E}_x(h(X_1))$ for all $x\in
E\cup\{\d\}$ for some $\lambda\in\mathbb{C}$,
\begin{itemize}
\item either $h(\d)\neq 0$, and then $\lambda=1$ and $P_1(h-h(\d))(x)=h(x)-h(\d)$ for all $x\in E$, so it follows
  from~\eqref{eq:extension-eta} that the function $h$ is constant,
\item or $h(\d)=0$ and
  \begin{itemize}
  \item either $\nu_\text{QS}(h)\neq 0$ and then it follows from~\eqref{eq:extension-eta} that $\lambda=\theta_0$ and $h=\nu_\text{QS}(h)\eta$,
  \item or $\nu_\text{QS}(h)=0$ and then it follows from~\eqref{eq:extension-eta} that $|\lambda|\leq\alpha\theta_0$.
  \end{itemize}
\end{itemize}
This last property quantifies the spectral gap of the operator $P_1$. These spectral properties are extended to the periodic case in
Section~\ref{sec:spectral} below.

In addition, assuming that the Markov chain $(X_n)_{n\geq 0}$ is an adapted stochastic process defined on a filtered probability
space $(\Omega,\{\mathcal{F}_m\}_{m\geq 0},\{\mathbb{P}_x\}_{x\in E\cup\{\d\}})$, it follows from~\cite{champagnat2020}
and~\cite{champagnat2017general} that, under~\eqref{eq:extension-eta}, for all $x\in E'$ where $E':=\{y\in E:\eta(x)>0\}$, the limit
\[
\QQ_x(A)=\lim_{n\rightarrow+\infty}\PP_x(A\mid n<\tau_\partial)
\]
for all $A\in\mathcal{F}_m$ with $m\in\mathbb{N}$, defines a probability measure on $\Omega$ under which the process $(\Omega,({\cal
  F}_m)_{m\geq 0},(X_n)_{n\geq 0},(\QQ_x)_{x\in E'})$ is an $E'$-valued homogeneous Markov chain called the $Q$-process. In addition,
this process is exponentially ergodic with unique invariant measure $\eta\, d\nu_\text{QS}$ in the following sense: for all measurable
$f:E'\rightarrow\mathbb{R}$ such that $|f|\leq V/\eta$, for all $x\in E'$,
\[
\left|\EE_{\QQ_x}(f(X_n))-\int_{E'}f(x)\eta(x)\nu_\text{QS}(dx) \right|\leq C'\alpha^n\frac{V(x)}{\eta(x)}
\]
for constants $C'\in\mathbb{R}_+$ and $\alpha\in(0,1)$ as in~\eqref{eq:extension-eta}. These properties of the $Q$-process are
extended to the periodic case in Section~\ref{sec:Q-proc} below.

To conclude,~\eqref{eq:extension-eta} also implies the following quasi-ergodicity property, as shown in~\cite{Ocafrain-PhD-2019} [ou autre
r\'ef\'erence de William ? Le papier sur (E') ?] (see also~\cite{BreyerRoberts1999,ChampagnatVillemonais2016}): for all
measurable function $f:E\rightarrow[-1,1]$ and all $x\in E$ and $n\geq 1$,
\[
\left|\EE_x\left[\left.\frac{1}{n}\sum_{k=0}^{n}f(X_k)\ \right|\ n<\tau_\d\right]-\int_{E}f(x)\eta(x)\nu_\text{QS}(dx)\right|\leq
\frac{C''}{n}
\]
for some constant $C''\in\mathbb{R}_+$ independent on $n$. This quasi-ergodic property is extended to the periodic case and improved in
Section~\ref{sec:QED} below.

\begin{rem} 
  \label{rem:unbounded-sg}
  All the results of this note can be easily extended to general unbounded semi-groups (i.e. not necessarily sub-Markov) following
  the same approach as in~\cite{ChampagnatVillemonais2019}. We restrict the presentation to sub-Markov semigroups to make simpler the
  probabilistic interpretation of our results.
\end{rem}

\section{General assumptions and first properties of quasi-sta\-tio\-na\-ry distributions for periodic Markov chains}
\label{sec:QSD}

In all the sequel, we shall make the standing assumption that $E$ is the disjoint union of measurable sets $A_0,\ldots,A_{t-1}$ such
that for all $x\in A_i$,
\begin{equation}
  \label{eq:periodic}
  P_1\11_{A_{j}}(x)=
  \begin{cases}
    0 & \text{if }j\neq i+1, \\
    P_1\11_E(x) & \text{if }j=i+1,
  \end{cases}  
\end{equation}
with the convention $i+1=0$ if $i=t-1$. Then $P$ is $t$ periodic and $Q_n=P_{nt}$ for all $n\geq 0$ defines a sub-Markov semi-group
on the set of bounded measurable function on $A_0$.

\begin{rem}
  The last assumption corresponds to a particular case of periodicity, which assumes a (weak) form of irreducibility. Other periodic
  situations may occur when the state space is not irreducible and different periods may exist in different irreducibility classes,
  or when the process becomes periodic after leaving a transient set. In such situations, the results of this not may be extended
  combining our arguments with e.g.\ those of~\cite{champagnat2022quasistationary}.
\end{rem}

We first observe that, under the above conditions, there is a one-to-one correspondance between QSDs for $Q$ and $P$.

\begin{prop}
  \label{prop:one-to-one-QSD}
  If $\nu_\text{QS}$ is a QSD for $P$ with absorption rate $\theta>0$, then $\nu:=\frac{\nu_\text{QS}(\cdot\cap A_0)}{\nu_\text{QS}(A_0)}$ is a QSD for $Q$ with
  absorption rate $\theta^t$.\\
  Conversely, if a probability measure $\nu$ on $A_0$ is a QSD for $Q$ with absorption rate $\theta$, then the probability measure on
  $E$ defined as 
  \begin{equation}
    \label{eq:QSD-periodic}
    \nu_\text{QS}
    =\frac{1}{\sum_{i=0}^{t-1}\theta^{-i/t}\,\nu
      P_i\11_E}\sum_{i=0}^{t-1}\theta^{-i/t}\,\nu P_i     
  \end{equation}
  is a QSD for $P$ with absorption rate $\theta^{1/t}$, and it is the only one such that $\nu=\frac{\nu_\text{QS}(\cdot\cap A_0)}{\nu_\text{QS}(A_0)}$.
\end{prop}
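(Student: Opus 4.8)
The plan is to reduce everything to the algebraic characterisation of QSDs: a probability measure $\mu$ on $E$ is a QSD for $P$ with absorption rate $\theta>0$ iff $\mu P_1=\theta\mu$ (equivalently $\mu P_n=\theta^n\mu$ for all $n$), and a probability measure on $A_0$ is a QSD for $Q$ with rate $\theta$ iff $\mu P_t=\theta\mu$. The structural fact I will use repeatedly, read off from~\eqref{eq:periodic} by induction on $i$, is that $P_i$ sends $A_j$ into $A_{i+j}$ (indices mod $t$); hence a measure supported on $A_0$ has $\mu P_i$ supported on $A_{i\bmod t}$, and for an arbitrary measure $\mu$ on $E$ the $j$-th summand of $\mu P_i=\sum_{j}\mu(\cdot\cap A_j)P_i$ is carried by $A_{i+j}$. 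Matching, on both sides of an identity $\mu P_i=(\text{const})\,\mu$, the parts supported on a given $A_\ell$ is the main trick.

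For the direct implication, I would first check $\nu_\text{QS}(A_0)>0$ so that $\nu$ makes sense: from $\nu_\text{QS} P_1=\theta\nu_\text{QS}$ one gets $\nu_\text{QS}(A_j)=\theta^{-1}\int_{A_{j-1}}P_1\mathbbm{1}_E\,d\nu_\text{QS}$, so $\nu_\text{QS}(A_{j-1})=0$ would force $\nu_\text{QS}(A_j)=0$; cycling around the $t$ classes this would give $\nu_\text{QS}(E)=0$, a contradiction, so every $\nu_\text{QS}(A_i)$ is positive. Then I would apply $\nu_\text{QS} P_{nt}=\theta^{nt}\nu_\text{QS}$ and compare, for $i=0$, the parts carried by $A_0$: since $\nu_\text{QS}(\cdot\cap A_0)P_{nt}$ is supported on $A_0$, this yields $\nu_\text{QS}(\cdot\cap A_0)P_{nt}=\theta^{nt}\nu_\text{QS}(\cdot\cap A_0)$, and dividing by $\nu_\text{QS}(A_0)$ gives $\nu Q_n=\theta^{nt}\nu$, i.e.\ $\nu$ is a QSD for $Q$ with absorption rate $\theta^t$.

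For the converse, set $c:=\sum_{i=0}^{t-1}\theta^{-i/t}\,\nu P_i\mathbbm{1}_E$; since the $i=0$ term equals $\nu(E)=1$ one has $c\in[1,\sum_{i}\theta^{-i/t}]$, so $\nu_\text{QS}=c^{-1}\sum_{i=0}^{t-1}\theta^{-i/t}\,\nu P_i$ from~\eqref{eq:QSD-periodic} is a genuine probability measure. The core computation is $\nu_\text{QS} P_1=c^{-1}\sum_{i=0}^{t-1}\theta^{-i/t}\,\nu P_{i+1}$: splitting off the $i=t-1$ term, using $\nu P_t=\theta\nu$ together with $\theta\cdot\theta^{-(t-1)/t}=\theta^{1/t}$, and reindexing the remaining sum by $j=i+1$ so that $\theta^{-(j-1)/t}=\theta^{1/t}\theta^{-j/t}$, everything collapses to $\theta^{1/t}\nu_\text{QS}$; hence $\nu_\text{QS}$ is a QSD for $P$ with rate $\theta^{1/t}$. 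Compatibility is immediate: among the summands of $\nu_\text{QS}$ only the $i=0$ one charges $A_0$, so $\nu_\text{QS}(\cdot\cap A_0)=c^{-1}\nu$ and $\nu_\text{QS}(A_0)=c^{-1}$, whence $\nu_\text{QS}(\cdot\cap A_0)/\nu_\text{QS}(A_0)=\nu$.

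For uniqueness, let $\nu_\text{QS}'$ be any QSD for $P$, with rate $\theta'$, such that $\nu_\text{QS}'(\cdot\cap A_0)/\nu_\text{QS}'(A_0)=\nu$. By the direct implication $\nu$ is then a QSD for $Q$ with rate $(\theta')^t$, but it is also one with rate $\theta$, and the rate of a QSD is determined by it (it equals $\nu P_t\mathbbm{1}_E$), so $\theta'=\theta^{1/t}$. Iterating $\nu_\text{QS}'P_i=\theta^{i/t}\nu_\text{QS}'$ and comparing the parts carried by $A_i$ gives $\nu_\text{QS}'(\cdot\cap A_i)=\theta^{-i/t}\nu_\text{QS}'(\cdot\cap A_0)P_i=\theta^{-i/t}\nu_\text{QS}'(A_0)\,\nu P_i$; summing over $i$ and testing against $\mathbbm{1}_E$ forces $\nu_\text{QS}'(A_0)=c^{-1}$, so $\nu_\text{QS}'=\nu_\text{QS}$. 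There is no real analytic difficulty anywhere; the only points needing care are the cyclic argument for $\nu_\text{QS}(A_0)>0$ and keeping the mod-$t$ indices and the fractional powers $\theta^{i/t}$ consistent — the whole proof is bookkeeping around the decomposition $E=A_0\sqcup\cdots\sqcup A_{t-1}$.
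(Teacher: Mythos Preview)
Your proof is correct and follows essentially the same route as the paper's: the cyclic argument showing $\nu_\text{QS}(A_0)>0$, the direct verification that~\eqref{eq:QSD-periodic} defines a QSD for $P$, and the uniqueness step via $\theta^{i/t}\mu(\cdot\cap A_i)=\mu(A_0)\,\nu P_i$ all match the paper's argument. The paper is considerably terser (it declares both the first statement and the fact that $\nu_\text{QS}$ is a QSD for $P$ to be ``clear''), so your version simply spells out the bookkeeping the paper omits.
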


\begin{proof}
  Observing that, if $\nu_\text{QS}$ is a QSD for $P$, then $\nu_\text{QS}(A_0)>0$ (since otherwise $\nu_\text{QS}(A_i)$ would be zero for all $i$
  using the QSD property), the first statement is clear. For the second statement, we first notice that, if $\nu$ is a QSD for $Q$,
  then it is clear that $\nu_\text{QS}$ as defined in~\eqref{eq:QSD-periodic} is a QSD for $P$. So it only remains to check the uniqueness
  of this QSD. Let $\mu$ be any QSD for $P$ such that $\nu=\frac{\mu(\cdot\cap A_0)}{\mu(A_0)}$. It follows from the first part of
  Proposition~\ref{prop:one-to-one-QSD} that its absorption rate is $\theta^{1/t}$. It then follows from~\eqref{eq:periodic} that, for all
  $i\in\{0,\ldots,t-1\}$,
  \[ 
  \theta^{i/t} \mu(\cdot\cap A_i)=\mu P_i (\cdot\cap
  A_i)=\mu(\cdot\cap A_0) P_i=\mu(A_0)\,\nu P_i.
  \] 
  Therefore,
  \[
  \mu=\mu(A_0)\sum_{i=0}^{t-1}\theta^{-i/t}\,\nu P_i,
  \]
  which entails~\eqref{eq:QSD-periodic}.
\end{proof}

In the next result, we observe that the one-to-one correspondance between QSDs for $(P_n)_{n\in\ZZ_+}$ and $(Q_k)_{k\in\ZZ_+}$ does
not extend to a one-to-one correspondance to the QSDs for $(P_n)_{n\in\ZZ_+}$ and $(P_{kt})_{k\in\ZZ_+}$, which will explain the
periodic asymptotic behaviors observed in the results of the next section.

\begin{prop}
  \label{prop:non-uniqueness-QSD}
  If $\nu$ is a QSD for $(Q_k)_{k\in\ZZ_+}$, then all convex combinations of 
  \begin{equation}
    \label{eq:QSDs}
    \nu,\ \frac{\nu P_1}{\nu P_1\11_E},\ldots,\ \frac{\nu P_{t-1}}{\nu P_{t-1}\11_E}    
  \end{equation}
  are QSDs for $(P_{kt})_{k\in\ZZ_+}$. Among them, only a single one is a QSD for $(P_n)_{n\in\ZZ_+}$.
\end{prop}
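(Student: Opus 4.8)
The plan is to verify the two assertions separately. For the first (that convex combinations of the measures in~\eqref{eq:QSDs} are QSDs for $(P_{kt})_{k\in\ZZ_+}$), I would first check that each individual measure $\nu_i := \nu P_i / \nu P_i\11_E$ (with $\nu_0 = \nu$) is a QSD for $Q = P_{\cdot t}$. The key computation uses the periodicity assumption~\eqref{eq:periodic}: since $\nu$ is supported on $A_0$, the measure $\nu P_i$ is supported on $A_i$, and $\nu P_i P_{kt} = \nu P_{kt} P_i$ because the semigroup is commutative (all $P_n$ commute as they are iterates of $P_1$). Since $\nu$ is a QSD for $Q$ with some absorption rate $\theta$, we have $\nu Q_k = \nu P_{kt} = \theta^k \, \nu P_{kt}\11_E \cdot \nu$ — more precisely $\nu P_{kt} = (\nu P_{kt}\11_E)\, \nu$ — so $\nu_i P_{kt} = (\nu P_i P_{kt}\11_E / \nu P_i\11_E)\, \nu_i$, which shows $\nu_i$ is a QSD for $(P_{kt})_k$ with the same absorption rate $\theta$ (per step of $Q$). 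Since all the $\nu_i$ share the same absorption rate for $Q_k = P_{kt}$, any convex combination $\sum_i c_i \nu_i$ also satisfies the QSD identity for $(P_{kt})_k$: the absorption rate being common, linearity of $\mu \mapsto \mu P_{kt}$ preserves the eigenrelation $\mu P_{kt} = \theta^k (\mu P_{kt} \11_E)\, \mu$ — here one must be slightly careful, but because $\nu_i P_{kt}\11_E = \theta^k$ for all $i$ (normalizing so that each $\nu_i$ is a probability measure), the combination is clean.

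For the second assertion — that exactly one convex combination is a QSD for the full semigroup $(P_n)_{n\in\ZZ_+}$ — I would invoke Proposition~\ref{prop:one-to-one-QSD}. Existence: the measure $\nu_\text{QS}$ given by formula~\eqref{eq:QSD-periodic} is a QSD for $P$ and is a convex combination of the $\nu_i$'s, since each $\nu P_i$ is a positive multiple of $\nu_i$; one checks the coefficients $\theta^{-i/t}\, \nu P_i\11_E / (\sum_j \theta^{-j/t}\, \nu P_j\11_E)$ are nonnegative and sum to one. Uniqueness: suppose $\mu = \sum_i c_i \nu_i$ is a QSD for $(P_n)_n$. Its restriction-and-renormalization to $A_0$ is exactly $\nu$ (since only $\nu_0 = \nu$ charges $A_0$, by the support property $\nu_i(A_0) = 0$ for $i \neq 0$, provided $t \geq 2$; the case $t=1$ is trivial). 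By the uniqueness clause in Proposition~\ref{prop:one-to-one-QSD}, $\mu$ must coincide with the measure in~\eqref{eq:QSD-periodic}. Hence the convex combination is uniquely determined.

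The main obstacle I anticipate is the bookkeeping around absorption rates and supports: one must confirm that the $\nu_i$ have disjoint supports $A_i$ (so that reading off $c_0$ from the $A_0$-mass of $\mu$ is legitimate), that all $\nu_i$ genuinely share the same $Q$-absorption rate $\theta$ (this rests on commutativity of the $P_n$ and on $\nu P_i\11_E > 0$, which holds because $\nu$ is a QSD hence assigns positive absorption-survival mass), and that the normalizations in~\eqref{eq:QSDs} and~\eqref{eq:QSD-periodic} are consistent. A minor subtlety is that a convex combination of QSDs for a semigroup is a QSD only when the absorption rates agree — here they do, which is why the statement works for $(P_{kt})_k$ but fails for $(P_n)_n$ (the measures $\nu_i$ have $P$-absorption rates that differ, being cyclically shifted). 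I would make this last point explicit, as it is precisely the phenomenon the proposition is designed to exhibit.
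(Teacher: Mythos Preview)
Your proposal is correct and follows essentially the same approach as the paper's (very terse) proof: verify that each $\nu_i$ is a QSD for $(P_{kt})_{k\in\ZZ_+}$ with common absorption rate $\theta$, so that any convex combination is again a QSD, and then invoke Proposition~\ref{prop:one-to-one-QSD} for both existence and uniqueness of the $(P_n)_n$-QSD among these combinations. One small inaccuracy in your closing remark: the $\nu_i$ are not QSDs for $(P_n)_n$ with \emph{different} absorption rates---for $t\geq 2$ and $i\neq 0$ they are simply not QSDs for $(P_n)_n$ at all, since $\nu_i P_1/(\nu_i P_1\11_E)=\nu_{i+1}\neq\nu_i$.
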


\begin{proof}
  It is clear that all the probability measures in~\eqref{eq:QSDs} are QSDs for $(P_{kt})_{k\in\ZZ_+}$ with the same absorption rate,
  so all convex combinations of these measures are QSDs. The unique one among them which is a QSD for $(P_n)_{n\in\ZZ_+}$ is given
  by~\eqref{eq:QSD-periodic}, by Proposition~\ref{prop:one-to-one-QSD}.
\end{proof}

In the following sections, we shall assume in addition that $Q$ satisfies the property~\eqref{eq:extension-eta} on $A_0$, i.e.\ there
exist a measurable function $V:A_0\rightarrow[1,+\infty)$, constants $\theta_0\in(0,1]$, $C\in\RR_+$ and $\alpha\in(0.1)$, a nonzero
measurable function $\eta:A_0\rightarrow\mathbb{R}_+$ such that $\eta/V$ is bounded and a probability measure $\nu$ on $A_0$ such
that $\nu(V)<+\infty$ and $\nu(\eta)>0$, such that
\begin{equation}
  \label{eq:lemme-note}
  \left|\theta_0^{-kt} Q_k f(x)-\eta(x)\nu(f)\right|\leq C_Q\alpha^k V(x)
\end{equation}
for all mesurable function $f$ on $A_0$ such that $|f|\leq V$, all $k\geq 0$ and all $x\in A_0$. Note that the absorption rate
$\theta_0$ in~\eqref{eq:extension-eta} has been modified above as $\theta_0^t$, so that the QSD $\nu_\text{QS}$ defined
in~\eqref{eq:QSD-periodic} has absorption rate $\theta_0$ instead of $\theta_0^{1/t}$. Note also that the last inequality implies,
setting $k=1$, that, for all $x\in A_0$,
\begin{equation}
  \label{eq:borne-triviale}
  Q_1 V(x)\leq \left(\|\eta/V\|_\infty \nu(V)+C_Q\alpha\right) V(x).
\end{equation}

\section{Quasi-limiting behavior of periodic Markov chains under~\eqref{eq:lemme-note}}
\label{sec:QLD}

Our first results extends~\eqref{eq:extension-eta} to periodic Markov chains. Let us define
\[
\mathcal{B}_{V}=\left\{f:E\rightarrow\mathbb{R}\text{ measurable, s.t.\ }\forall i\in\{0,\ldots,t-1\},\
  |P_i(f\11_{A_i})|\leq V\text{ on }A_0\right\}.
\]
Note that this set is non-empty since, by~\eqref{eq:borne-triviale}, it contains all functions of the form $P_j g$ for any
$j\in\{0,\ldots,t-1\}$ and any function $g$ such that $|g|\leq\frac{V}{\|\eta/V\|_\infty \nu(V)+C\alpha}$ on $A_0$ and $g\equiv 0$ on
$E\setminus A_0$. Since $V\geq 1$ and $P$ is sub-Markov, it also contains all measurable functions bounded by $1$.

\begin{thm}
  \label{thm:eta-periodic}
  Assume that $P$ satisfies~\eqref{eq:periodic} and that $Q$ satisfies~\eqref{eq:lemme-note}. Then, there exists $C<+\infty$ such
  that, for all $f\in\mathcal{B}_{V}$, $n\geq 1$, $j\in\{0,\ldots,t-1\}$ and $x\in A_k$ for some $k\in\{0,\ldots,t-1\}$,
  \begin{equation}
    \label{eq:main-eta-periodic}
    \left|\theta_0^{-(nt+j)}P_{nt+j} f(x)-\theta_0^{-(t+j)}P_{t-k}\eta(x) \,\nu P_{k+j} f\right|\leq C'_Q\alpha^n P_{t-k} V(x),
  \end{equation}
  where we extended $\nu$ by 0 to $E\setminus A_0$, where the constant $\alpha\in(0,1)$ is the same as in~\eqref{eq:lemme-note}
  and where
  \[
    C'_Q=C_Q\theta_0^{-2t}\left(\|\eta/V\|_\infty \nu(V)+C_Q\alpha\right)^2.
  \]
\end{thm}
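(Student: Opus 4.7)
The plan is to split the $nt+j$ steps into three consecutive pieces: a first phase of length $t-k$ that drives the chain from $A_k$ into $A_0$, a middle phase of $n-1$ full periods that stays inside $A_0$ and is controlled by \eqref{eq:lemme-note}, and a final phase of length $k+j$. Precisely, for $x\in A_k$, the semigroup property together with \eqref{eq:periodic} yields the decomposition
\begin{equation*}
P_{nt+j} f(x) = P_{t-k}\bigl(Q_{n-1}(P_{k+j} f)\bigr)(x) = \int_{A_0} P_{t-k}(x, dy)\, Q_{n-1} h(y),
\end{equation*}
where I set $h := P_{k+j} f$ viewed on $A_0$, and where $P_{t-k}(x,\cdot)$ is supported in $A_0$ by \eqref{eq:periodic}. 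This reduces the problem to applying \eqref{eq:lemme-note} to $Q_{n-1}$ acting on $h$ at points of $A_0$, and then integrating against the kernel $P_{t-k}(x,\cdot)$.

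Before invoking \eqref{eq:lemme-note}, I must bound $h$ on $A_0$ by a multiple of $V$. Starting from any $y\in A_0$, the chain lies in $A_{(k+j)\bmod t}$ at time $k+j$, so $h(y) = P_{k+j}(f\mathbbm{1}_{A_{(k+j)\bmod t}})(y)$. If $k+j < t$, the definition of $\mathcal{B}_V$ directly yields $|h|\le V$ on $A_0$. Otherwise $k+j = t+r$ with $0\le r<t$, and $h = Q_1\bigl(P_r(f\mathbbm{1}_{A_r})\bigr)$ on $A_0$; combining $|P_r(f\mathbbm{1}_{A_r})|\le V$ (again from $\mathcal{B}_V$) with the crude bound \eqref{eq:borne-triviale} gives $|h|\le(\|\eta/V\|_\infty\nu(V)+C_Q\alpha)\,V$ on $A_0$. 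Hence in both cases $|h|\le C_g V$ on $A_0$ with $C_g \le \|\eta/V\|_\infty\nu(V)+C_Q\alpha$.

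Applying \eqref{eq:lemme-note} to $h/C_g$ with index $n-1$, then integrating against $P_{t-k}(x,dy)$, using the identity $\nu(h)=\nu P_{k+j}f$ (which holds because $\nu$ is supported on $A_0$ and $h$ coincides with $P_{k+j}f$ there), and finally multiplying by $\theta_0^{-(t+j)}$, yields
\begin{equation*}
\bigl|\theta_0^{-(nt+j)}P_{nt+j}f(x)-\theta_0^{-(t+j)}P_{t-k}\eta(x)\,\nu P_{k+j}f\bigr| \le \theta_0^{-(t+j)}C_g C_Q\alpha^{n-1}\,P_{t-k}V(x).
\end{equation*}
The announced constant $C'_Q$ in \eqref{eq:main-eta-periodic} is then obtained by bounding $\theta_0^{-(t+j)}\le\theta_0^{-2t}$ and absorbing the remaining factor $C_g/\alpha$ (arising from $\alpha^{n-1}=\alpha^n/\alpha$) into an extra factor of $\|\eta/V\|_\infty\nu(V)+C_Q\alpha$, yielding the stated quadratic dependence.

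The main subtlety lies in the second case $k+j\ge t$, where an extra period of $P$ hidden inside $h$ must be handled via \eqref{eq:borne-triviale}; this produces one factor of $\|\eta/V\|_\infty\nu(V)+C_Q\alpha$ in $C'_Q$, while the second factor accounts for the passage from $\alpha^{n-1}$ to $\alpha^n$. The remaining steps are routine: a linear application of \eqref{eq:lemme-note} to the explicit function $h/C_g$, followed by integration against the kernel $P_{t-k}(x,\cdot)$.
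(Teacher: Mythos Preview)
Your proof is correct and follows essentially the same route as the paper's: both factor $P_{nt+j}$ on $A_k$ as $P_{t-k}\circ Q_{n-1}\circ P_{k+j}$ (the paper writes this as a sum $\sum_i P_{t+j-i}Q_{n-1}P_i(\cdot\,\mathbbm{1}_{A_i})$ that collapses to a single term when evaluated at $x\in A_k$), apply \eqref{eq:lemme-note} to the middle $Q_{n-1}$, and invoke \eqref{eq:borne-triviale} once to absorb the extra period appearing when $k+j\ge t$. The only cosmetic difference is where the factor from \eqref{eq:borne-triviale} shows up---in your argument it enters through the bound on $h=P_{k+j}f$, in the paper's through the bound on $P_{2t-k}V$---but the resulting estimate and constant are identical.
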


\begin{proof}
  Let $f\in\mathcal{B}_{V}$, $n\geq 1$ and $j\in\{0,\ldots,t-1\}$. We have
  \begin{multline*}
    \left|\theta_0^{-(nt+j)}P_{nt+j} f-\theta_0^{-(t+j)} \sum_{i=0}^{t-1} P_{t+j-i}\eta\,\nu P_i f\right| \\
    \begin{aligned}
      & \leq \theta_0^{-(t+j)}\sum_{i=0}^{t-1} \left|\theta_0^{-(n-1)t}P_{t+j-i} P_{(n-1)t} P_i (f\11_{A_i})- P_{t+j-i}\eta\,\nu P_i
        (f\11_{A_i})\right| \\
      & \leq \theta_0^{-(t+j)}\sum_{i=0}^{t-1} P_{t+j-i} \left|\theta_0^{-(n-1)t} Q_{n-1} P_i (f\11_{A_i})-\nu P_i
        (f\11_{A_i})\,\eta\right| \\
      & \leq C_Q \theta_0^{-(t+j)} \,\alpha^{n-1}\sum_{i=0}^{t-1} P_{t+j-i}V.
    \end{aligned}
  \end{multline*}
  Now, given $x\in A_k$ for some $k\in\{0,\ldots,t-1\}$, we observe that only a single term in the sum in the left-hand side of the
  last equation is nonzero, corresponding to the unique index $i\in\{0,1,\ldots,t-1\}$ such that $j-i=-k$, $j-i=t-k$ or $j-i=2t-k$. In the first
  case, this term is $P_{t-k}\eta(x)\,\nu P_{j+k} f$; in the second case, it is 
  \begin{align*}
    P_{2t-k}\eta(x)\,\nu P_{j+k-t} f & =P_{t-k}Q_1\eta(x)\,\nu P_{j+k-t} (f\11_{A_{j+k-t}}) \\ & =P_{t-k}\eta(x)\,\theta_0^{t}\nu P_{j+k-t}
    (f\11_{A_{j+k-t}}) \\ & =P_{t-k}\eta(x)\,\nu Q_1 P_{j+k-t} (f\11_{A_{j+k-t}})=P_{t-k}\eta(x)\,\nu P_{j+k} f;
  \end{align*}
  and the third case can be handled similarly.
  So~\eqref{eq:main-eta-periodic} is proved using~\eqref{eq:borne-triviale}.
\end{proof}

As in the aperiodic case, the last result implies geometric estimates in $V$-weighted total variation for the long time behavior of the
conditional distributions of the Markov chain given non-absorption.

\begin{thm}
  \label{thm:QLD-periodic}
  Under the assumptions of Theorem~\ref{thm:eta-periodic}, there exist constants $C<+\infty$ and $\bar{\alpha}\in(0,\alpha)$ and a
  nonzero measurable function $\varphi_2:A_0\to[0,1]$ such that $\varphi_2/\eta$ is bounded, such that, for all probability measure
  $\mu$ on $E$, all $f\in\mathcal{B}_{V}$, all $n\in\mathbb{Z}_+$ and all $j\in\{0,1,\ldots,t-1\}$,
  \begin{align}
    \left|\frac{\mu P_{nt+j}f}{\mu P_{nt+j}\11_E} -\frac{\sum_{i=0}^{t-1}(\restriction{\mu}{A_i}P_{t-i}\eta)\nu
        P_{i+j}f}{\sum_{i=0}^{t-1}(\restriction{\mu}{A_i} P_{t-i}\eta)\nu P_{i+j}\mathbbm{1}_E}\right|
    &\leq C\,\bar{\alpha}^n\frac{\sum_{i=0}^{t-1}\restriction{\mu}{A_i} P_{t-i} V}{\sum_{i=0}^{t-1}\restriction{\mu}{A_i}
      P_{t-i}\varphi_2}.
      \label{eq:main-1}
  \end{align}
  If in addition $n$ is large enough so that
  \begin{equation}
    \label{eq:hyp-main}
    C'_Q\theta_0^{-4t}\alpha^n\frac{\sum_{i=0}^{t-1}\restriction{\mu}{A_i} P_{t-i} V}{\sum_{i=0}^{t-1}\restriction{\mu}{A_i}
      P_{t-i}\eta}\leq \frac{1}{2},
  \end{equation}
  where the constants $C_Q$ and $\alpha$ are those from~\eqref{eq:lemme-note},
  there exists a constant $C'$ independent of $n$ and $\mu$ such that, for all probability measure $\mu$ on $E$, all
  $f\in\mathcal{B}_{V}$ and all $j\in\{0,1,\ldots,t-1\}$,
  \begin{align}
    \left|\frac{\mu P_{nt+j}f}{\mu P_{nt+j}\11_E} -\frac{\sum_{i=0}^{t-1}(\restriction{\mu}{A_i}P_{t-i}\eta)\nu
    P_{i+j}f}{\sum_{i=0}^{t-1}(\restriction{\mu}{A_i} P_{t-i}\eta)\nu P_{i+j}\mathbbm{1}_E}\right|
    &\leq C'\,\alpha^n\frac{\sum_{i=0}^{t-1}\restriction{\mu}{A_i} P_{t-i} V}{\sum_{i=0}^{t-1}\restriction{\mu}{A_i} P_{t-i}\eta}.
      \label{eq:main-2}
  \end{align}
\end{thm}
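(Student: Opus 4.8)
The plan is to derive Theorem~\ref{thm:QLD-periodic} from Theorem~\ref{thm:eta-periodic} by taking ratios, just as in the aperiodic case, but with careful bookkeeping of the periodic classes. First I would integrate the estimate \eqref{eq:main-eta-periodic} against an arbitrary probability measure $\mu$: decomposing $\mu=\sum_{k=0}^{t-1}\restriction{\mu}{A_k}$ and noting that for $x\in A_k$ the relevant term is $P_{t-k}\eta(x)\,\nu P_{k+j}f$, I get
\[
  \left|\theta_0^{-(nt+j)}\mu P_{nt+j}f-\theta_0^{-(t+j)}\sum_{k=0}^{t-1}(\restriction{\mu}{A_k}P_{t-k}\eta)\,\nu P_{k+j}f\right|
  \leq C'_Q\alpha^n\sum_{k=0}^{t-1}\restriction{\mu}{A_k}P_{t-k}V.
\]
Applying this with $f$ and with $f=\11_E$ and dividing, the numerator and denominator errors are both controlled by $\alpha^n\sum_k\restriction{\mu}{A_k}P_{t-k}V$. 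The standard ratio manipulation $\left|\frac{a}{b}-\frac{a'}{b'}\right|\le\frac{|a-a'|}{b}+\frac{a'}{b}\cdot\frac{|b-b'|}{b'}$ then produces \eqref{eq:main-2}, \emph{provided} the denominator $\mu P_{nt+j}\11_E$ (equivalently $\theta_0^{-(t+j)}\sum_k(\restriction{\mu}{A_k}P_{t-k}\eta)\nu P_{k+j}\11_E$ up to the error) is bounded below in terms of $\sum_k\restriction{\mu}{A_k}P_{t-k}\eta$; this is exactly what hypothesis \eqref{eq:hyp-main} buys — it forces the error term to be at most half the main term, so $\mu P_{nt+j}\11_E \ge \tfrac12\theta_0^{-(t+j)}\cdot(\text{something})\cdot\sum_k\restriction{\mu}{A_k}P_{t-k}\eta$ after using that $\nu P_{k+j}\11_E$ is bounded below uniformly in $k,j$ (each is positive and there are finitely many). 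Tracking constants gives the factor $C'_Q\theta_0^{-4t}$ appearing in \eqref{eq:hyp-main}.

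The more delicate part is \eqref{eq:main-1}, which must hold for \emph{all} $n$, with no smallness hypothesis, at the price of replacing $\eta$ in the denominator by a smaller function $\varphi_2$ and $\alpha$ by some $\bar\alpha<\alpha$. The idea is that when \eqref{eq:hyp-main} fails, $n$ is bounded by a constant depending only on the ratio $\sum_k\restriction{\mu}{A_k}P_{t-k}V \,/\, \sum_k\restriction{\mu}{A_k}P_{t-k}\eta$, so one needs a uniform a priori bound on the left-hand side of \eqref{eq:main-1} that degrades only polynomially in that ratio, and then absorb the polynomial loss into a slightly worse geometric rate $\bar\alpha$. Concretely, I would construct $\varphi_2$ as a suitable small multiple of $\eta$ — or more precisely as $\eta$ times an indicator/truncation ensuring $\varphi_2 \le \eta$ and $\varphi_2$ nonzero — designed so that $\sum_k\restriction{\mu}{A_k}P_{t-k}\varphi_2$ still controls $\mu P_{nt+j}\11_E$ from below for all $n$ large enough that, say, $C'_Q\alpha^n \le$ const. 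For the finitely many remaining small $n$, the left side of \eqref{eq:main-1} is trivially bounded (the conditional quantities $\mu P_{nt+j}f/\mu P_{nt+j}\11_E$ are bounded since $|f|\le V$ and one can bound $\mu P_{nt+j}V/\mu P_{nt+j}\11_E$ using \eqref{eq:borne-triviale} iterated, giving a bound of the form $\text{const}^n$ times the ratio $\sum_k\restriction{\mu}{A_k}P_{t-k}V/\sum_k\restriction{\mu}{A_k}P_{t-k}\varphi_2$); choosing $\bar\alpha$ close enough to $\alpha$ then dominates $\text{const}^n\bar\alpha^{-n}$ on this finite range and we are done.

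The main obstacle I anticipate is the lower bound on the denominator $\mu P_{nt+j}\11_E$: in the aperiodic case this is where one needs $P_n\11_E(x) \gtrsim \theta_0^n\eta(x)$ with a controllable constant, and here one must instead show $\mu P_{nt+j}\11_E \gtrsim \theta_0^{nt+j}\sum_k\restriction{\mu}{A_k}P_{t-k}\varphi_2$ \emph{uniformly in $j$ and in $\mu$}, which requires both that $\nu P_{k+j}\11_E>0$ for every pair $(k,j)$ (this uses the periodic irreducibility \eqref{eq:periodic} together with $\nu(\eta)>0$ and \eqref{eq:lemme-note}) and a quantitative handle on how $P_{t-k}\varphi_2$ compares to $P_{t-k}\eta$. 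Choosing $\varphi_2$ correctly so that this works for \emph{all} $n$ simultaneously, rather than just asymptotically, is the technical heart of the argument; everything else is the routine ratio estimate plus constant-chasing.
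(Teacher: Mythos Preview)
Your treatment of \eqref{eq:main-2} is correct and matches the paper's: integrate \eqref{eq:main-eta-periodic} over the periodic classes, take the ratio, and use \eqref{eq:hyp-main} together with $\nu P_{k+j}\11_E\ge\nu Q_2\11_E=\theta_0^{2t}$ to bound the denominator from below.

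The gap is in your handling of \eqref{eq:main-1} when \eqref{eq:hyp-main} fails. The set of such $n$ is \emph{not} uniformly finite: its size is of order $\log\bigl(\sum_i\restriction{\mu}{A_i}P_{t-i}V\big/\sum_i\restriction{\mu}{A_i}P_{t-i}\eta\bigr)$, which is unbounded over $\mu$, so a crude bound of the form $\text{const}^n$ (with $\text{const}>1$, as iterating \eqref{eq:borne-triviale} gives for the numerator) cannot be absorbed into $C\bar\alpha^n$ by enlarging $C$. More importantly, taking $\varphi_2$ to be ``$\eta$ times an indicator'' does not give what is actually needed: a \emph{Lyapunov lower bound} $Q_1\varphi_2\ge\theta_2^t\varphi_2$ for some $\theta_2\in(\theta_0\alpha^{1/t},\theta_0)$. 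The paper obtains this by setting $\varphi_2$ proportional to $\sum_{k=0}^{n_0-1}\theta_2^{-kt}Q_k\11_K$ for a level set $K=\{\eta\ge\varepsilon,\,V\le 1/\varepsilon\}$ and a well-chosen $n_0$; this sum telescopes under $Q_1$ to produce the inequality. That Lyapunov bound yields $\mu P_{nt+j}\11_E\gtrsim\theta_2^{nt}\sum_i\restriction{\mu}{A_i}P_{t-i}\varphi_2$ uniformly in $n$ and $\mu$. On the numerator side one does \emph{not} iterate \eqref{eq:borne-triviale}; instead one reuses \eqref{eq:main-eta-periodic} together with the \emph{failure} of \eqref{eq:hyp-main} (which forces $\sum_i\restriction{\mu}{A_i}P_{t-i}\eta\lesssim\alpha^n\sum_i\restriction{\mu}{A_i}P_{t-i}V$) to get $\theta_0^{-nt}\mu P_{nt+j}|f|\lesssim\alpha^n\sum_i\restriction{\mu}{A_i}P_{t-i}V$. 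Dividing gives the rate $\bar\alpha=(\theta_0/\theta_2)^t\alpha$ directly. You correctly flag the denominator lower bound as the technical heart, but the construction you sketch does not deliver the Lyapunov inequality, and your proposed numerator bound is too crude to close the argument.
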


\begin{rem}
  \label{rem:main}
  As will appear in the proof below, the function $\varphi_2$ in the last result can be chosen positive in a large part of the
  support of $\eta$. More precisely, for any fixed $\varepsilon>0$, the function $\varphi_2$ can be chosen such that
  \begin{equation*}
    \inf\left\{\varphi_2(x),x\in E,\eta(x)\geq\varepsilon,V(x)\leq 1/\varepsilon\right\}>0.
  \end{equation*}
\end{rem}

\begin{proof}
  Let us first construct the function $\varphi_2$. Let $\theta_2\in(0,1)$ be such that
  \[
    \left(\frac{\theta_0}{\theta_2}\right)^t \alpha< 1
  \]
  and let $\varepsilon>0$ be small enough so that $\nu(K)\geq 1/2$, where
  \[
    K:=\left\{\eta\geq\varepsilon,\,V\leq \frac{1}{\varepsilon}\right\}.
  \]
  By~\eqref{eq:lemme-note}, there exists $n_0\in\mathbb{N}$ such that,
  \[
    \inf_{x\in K}\theta_2^{-n_0 t}\mathbb{P}_x(X_{n_0}\in K)\geq 1.
  \]
  Set for all $x\in K$
  \[
    \varphi_2(x):=\frac{\theta_2^{-t}-1}{\theta_2^{-n_0 t}-1}\sum_{k=0}^{n_0-1}\theta_2^{-k t}Q_k\mathbbm{1}_K(x).
  \]
  It is then easy to check (cf. e.g. Lemma 3.?? in~\cite{ChampagnatVillemonais2017b}) that, for all $x
  \in A_0$,
  \begin{equation}
    \label{eq:phi_2-Lyap}
    Q_1 \varphi_2(x)\geq\theta_2^t\varphi_2(x).
  \end{equation}
  In addition, since $\mathbbm{1}_{K}\leq \eta/\varepsilon$ and $Q_1\eta=\theta_0\eta$,
  \begin{equation}
    \label{eq:phi_2<eta}
    \varphi_2(x)\leq \frac{\theta_2^{-t}-1}{(\theta_2^{-n_0 t}-1)(1-\theta_0/\theta_2)}\,\eta(x),
  \end{equation}
  so we have proved the properties of the function $\varphi_2$ stated in Theorem~\ref{thm:QLD-periodic} and Remark~\ref{rem:main}.

  Now, assume that $n$ satisfies~\eqref{eq:hyp-main}. It then follows from~\eqref{eq:main-eta-periodic} that there exists a
  constant $C$ such that
  \begin{multline*}
    \left|\frac{\mu P_{nt+j}f}{\mu P_{nt+j}\11_E} -\frac{\sum_{i=0}^{t-1}(\restriction{\mu}{A_i}P_{t-i}\eta)\nu
        P_{i+j}f}{\sum_{i=0}^{t-1}(\restriction{\mu}{A_i} P_{t-i}\eta)\nu P_{i+j}\mathbbm{1}_E}\right| \\
    \begin{aligned}
      \leq & \frac{\theta_0^{-(n-1)t}\mu P_{nt+j}|f|}{\theta_0^{-(n-1)t}\mu
        P_{nt+j}\mathbbm{1}_E\left[\sum_{i=0}^{t-1}(\restriction{\mu}{A_i}P_{t-i}\eta)\nu P_{i+j}\mathbbm{1}_E\right]} \times \\ &
      \qquad\qquad \times\left|\theta_0^{-(n-1)t}\sum_{i=0}^{t-1}\restriction{\mu}{A_i}
        P_{t-i}P_{(n-1)t}P_{i+j}\mathbbm{1}_E-\sum_{i=0}^{t-1}(\restriction{\mu}{A_i} P_{t-i}\eta)\nu P_{i+j}\mathbbm{1}_E\right| \\
      & +\frac{1}{\sum_{i=0}^{t-1}(\restriction{\mu}{A_i}P_{t-i}\eta)\nu P_{i+j}\mathbbm{1}_E}\times \\ & \qquad\qquad
      \times\left|\theta_0^{-(n-1)t}\sum_{i=0}^{t-1}\restriction{\mu}{A_i}
        P_{t-i}P_{(n-1)t}P_{i+j}f-\sum_{i=0}^{t-1}(\restriction{\mu}{A_i} P_{t-i}\eta)\nu P_{i+j}f\right| \\ & \leq
      C\alpha^{n}\frac{\sum_{i=0}^{t-1} \restriction{\mu}{A_i}P_{t-i} V}{\sum_{i=0}^{t-1} (\restriction{\mu}{A_i}P_{t-i}
        \eta) \nu P_{i+j}\mathbbm{1}_E}\left(1+\frac{\theta_0^{-(n-1)t}\mu P_{nt+j}|f|}{\theta_0^{-(n-1)t}\mu
          P_{nt+j}\mathbbm{1}_E\left[\sum_{i=0}^{t-1}(\restriction{\mu}{A_i}P_{t-i}\eta)\nu P_{i+j}\mathbbm{1}_E\right]}\right).
    \end{aligned}
  \end{multline*}
  Since $\nu P_{i+j}\mathbbm{1}_E\geq \nu P_{i+j}P_{2t-i-j}\mathbbm{1}_E=\nu Q_2\mathbbm{1}_E=\theta_0^{2t}>0$, we deduce that
  \begin{multline*}
    \left|\frac{\mu P_{nt+j}f}{\mu P_{nt+j}\11_E} -\frac{\sum_{i=0}^{t-1}(\restriction{\mu}{A_i}P_{t-i}\eta)\nu
        P_{i+j}f}{\sum_{i=0}^{t-1}(\restriction{\mu}{A_i} P_{t-i}\eta)\nu P_{i+j}\mathbbm{1}_E}\right| \\
    \leq C\alpha^{n}\frac{\sum_{i=0}^{t-1} \restriction{\mu}{A_i}P_{t-i} V}{\sum_{i=0}^{t-1} \restriction{\mu}{A_i}P_{t-i}
      \eta}\left(1+\frac{\theta_0^{-(n-1)t}\mu P_{nt+j}|f|}{\theta_0^{-(n-1)t}\mu
        P_{nt+j}\mathbbm{1}_E\left[\sum_{i=0}^{t-1}(\restriction{\mu}{A_i}P_{t-i}\eta)\nu P_{i+j}\mathbbm{1}_E\right]}\right)
  \end{multline*}
  for some constant $C$. Now, we use~\eqref{eq:hyp-main} to deduce from~\eqref{eq:main-eta-periodic} that
  \begin{align*}
    \theta_0^{-(n-1)t}\mu P_{nt+j}\mathbbm{1}_E
    & \geq \sum_{i=0}^{t-1}(\restriction{\mu}{A_i}P_{t-i}\eta)\nu P_{i+j}\mathbb{1}_E-C'_Q
      \theta_0^{-(t+j)}\alpha^n\sum_{i=0}^{t-1}\restriction{\mu}{A_i}P_{t-i}V \\
    & \geq \theta_0^{2t}\sum_{i=0}^{t-1}\restriction{\mu}{A_i}P_{t-i}\eta-C'_Q
      \theta_0^{-2t}\alpha^n\sum_{i=0}^{t-1}\restriction{\mu}{A_i}P_{t-i}V \\
    & \geq\frac{\theta_0^{2t}}{2}\sum_{i=0}^{t-1}(\restriction{\mu}{A_i}P_{t-i}\eta)\nu P_{i+j}\mathbb{1}_E 
  \end{align*}
  and
  \begin{align*}
    \theta_0^{-(n-1)t}\mu P_{nt+j}|f|
    & \leq \sum_{i=0}^{t-1}(\restriction{\mu}{A_i}P_{t-i}\eta)\nu
      P_{i+j}|f|+C'_Q\theta_0^{-(t+j)}\alpha^n \sum_{i=0}^{t-1}\restriction{\mu}{A_i}P_{t-i} V
    \\ & \leq \left(\nu(V)+\frac{\theta_0^{3t}}{2}\right)\sum_{i=0}^{t-1}\restriction{\mu}{A_i}P_{t-i} \eta,
  \end{align*}
  where we used the definition of $\mathcal{B}_V$ in the last inequality. Therefore,
  \[
     \left|\frac{\mu P_{nt+j}f}{\mu P_{nt+j}\11_E} -\frac{\sum_{i=0}^{t-1}(\restriction{\mu}{A_i}P_{t-i}\eta)\nu
        P_{i+j}f}{\sum_{i=0}^{t-1}(\restriction{\mu}{A_i} P_{t-i}\eta)\nu P_{i+j}\mathbbm{1}_E}\right| 
    \leq C \alpha^{n}\frac{\sum_{i=0}^{t-1} \restriction{\mu}{A_i}P_{t-i} V}{\sum_{i=0}^{t-1} \restriction{\mu}{A_i}P_{t-i}
      \eta}
  \]
  for some constant $C$, so we have proved~\eqref{eq:main-2} and, thanks to~\eqref{eq:phi_2<eta},~\eqref{eq:main-1} for $n$
  satisfying~\eqref{eq:hyp-main}.

  Assume now that~\eqref{eq:hyp-main} is not satisfied. Then, using~\eqref{eq:main-eta-periodic} in a similar way as above,
  \begin{multline*}
    \left|\frac{\mu P_{nt+j}f}{\mu P_{nt+j}\11_E} -\frac{\sum_{i=0}^{t-1}(\restriction{\mu}{A_i}P_{t-i}\eta)\nu
        P_{i+j}f}{\sum_{i=0}^{t-1}(\restriction{\mu}{A_i} P_{t-i}\eta)\nu P_{i+j}\mathbbm{1}_E}\right|
    \leq\frac{\theta_0^{-(n-1)t}\mu P_{nt+j}f}{\theta_0^{-(n-1)t}\mu P_{nt+j}\11_E}+\frac{\nu(V)}{\theta_0^{2t}} \\
    \leq \frac{\nu(V) \sum_{i=0}^{t-1}\restriction{\mu}{A_i}P_{t-i}\eta+C'_Q\theta_0^{-(t+j)}\alpha^n
      \sum_{i=0}^{t-1}\restriction{\mu}{A_i}P_{t-i}V }{\theta_0^{-(n-1)t}\sum_{i=0}^{t-1}\restriction{\mu}{A_i} P_{t-i}Q_n
      \restriction{(P_{i+j}\11_{A_{i+j}})}{A_0}}+\frac{\nu(V)}{\theta_0^{2t}},
  \end{multline*}
  where we made the abuse of notation that $A_{i+j}=A_{i+j-t}$ if $t\leq i+j\leq 2t-1$. Now, $\11_{A_{i+j}}\geq P_{2t-i-j}\11_{A_0}$,
  so for all $x\in A_0$,
  \[
    P_{i+j}\11_{A_{i+j}}\geq P_{2t}\11_{A_0}\geq P_{2t}\varphi_2\geq \theta_2^{2t}\varphi_2.
  \]
  Therefore,In additon, using~\eqref{eq:phi_2-Lyap} and that~\eqref{eq:hyp-main} is not satisfied,
  \[
    \left|\frac{\mu P_{nt+j}f}{\mu P_{nt+j}\11_E} -\frac{\sum_{i=0}^{t-1}(\restriction{\mu}{A_i}P_{t-i}\eta)\nu
        P_{i+j}f}{\sum_{i=0}^{t-1}(\restriction{\mu}{A_i} P_{t-i}\eta)\nu P_{i+j}\mathbbm{1}_E}\right|
    \leq C\alpha^n\left(\frac{\theta_0}{\theta_2}\right)^{(n-1)t}\frac{\sum_{i=0}^{t-1}\restriction{\mu}{A_i} P_{t-i}
      V}{\sum_{i=0}^{t-1}\restriction{\mu}{A_i} P_{t-i} \varphi_2},
  \]
  so~\eqref{eq:main-1} is proved with $\bar{\alpha}=\alpha\theta_0^t/\theta_2^t<1$.  
\end{proof}

\begin{rem}
  \label{rem:QLD}
  Under the assumptions of Theorem~\ref{thm:eta-periodic}, the conditional distributions of $X$ converge to a quasi-stationary
  distribution if and only if the measure
  \[
  \frac{\sum_{i=0}^{t-1}(\restriction{\mu}{A_i}P_{t-i}\eta)\nu
    P_{i+j}}{\sum_{i=0}^{t-1}(\restriction{\mu}{A_i} P_{t-i}\eta)\nu P_{i+j}\mathbbm{1}_E}
  \]
  does not depend on $j$. Indeed, in this case, this measure is a quasi-limiting distribution, hence the unique quasi-stationary
  distribution given in Proposition~\ref{prop:one-to-one-QSD}. Comparing the measures for $j=0$ and $j=1$ entails that, for all
  $f\in\mathcal{B}_V$, the equality
  \begin{multline*}
    \left(\sum_{i=0}^{t-1}(\restriction{\mu}{A_i}P_{t-i}\eta)\nu
      P_{i} f\right)\left(\sum_{i=0}^{t-1}(\restriction{\mu}{A_i}P_{t-i}\eta)\nu
      P_{i+1} \11_E\right) \\ =\left(\sum_{i=0}^{t-1}(\restriction{\mu}{A_i}P_{t-i}\eta)\nu
      P_{i+1} f\right)\left(\sum_{i=0}^{t-1}(\restriction{\mu}{A_i}P_{t-i}\eta)\nu
      P_{i} \11_E\right)    
  \end{multline*}
  should hold true. Choosing $f=\11_{A_k}$, we obtain for all $k\in\{0,\ldots, t-1\}$,
  \[
  \frac{\restriction{\mu}{A_i}P_{t-k+1}\eta}{\restriction{\mu}{A_i}P_{t-k}\eta}=\frac{\sum_{i=0}^{t-1}(\restriction{\mu}{A_i}P_{t-i}\eta)\nu
    P_{i+1} \11_E}{\sum_{i=0}^{t-1}(\restriction{\mu}{A_i}P_{t-i}\eta)\nu
    P_{i} \11_E}.
  \]
  Since the right-hand side, say $\gamma$, does not depend on $k$, we deduce that
  \[
  \restriction{\mu}{A_i}P_{t-k}\eta= a\gamma^{-i}
  \]
  for some constant $a>0$. We deduce that
  \[
  \gamma=\frac{\sum_{i=0}^{t-1}(\restriction{\mu}{A_i}P_{t-i}\eta)\nu
    P_{i+1} \11_E}{\sum_{i=0}^{t-1}(\restriction{\mu}{A_i}P_{t-i}\eta)\nu
    P_{i} \11_E}=\gamma\,\frac{\sum_{i=0}^{t-1}\gamma^{-(i+1)}\nu
    P_{i+1} \11_E}{\sum_{i=0}^{t-1} \gamma^{-i}\nu
    P_{i} \11_E}=\gamma\left(1+\frac{1-\gamma^{-t}\,\nu P_t\11_E}{\sum_{i=0}^{t-1} \gamma^{-i}\nu
      P_{i} \11_E}\right).
  \]
  Since $\nu P_t=\theta_0^t \nu$, this equality is possible only if $\gamma=\theta_0$. Therefore, under the assumptions of
  Theorem~\ref{thm:QLD-periodic}, given $X_0\sim\mu$ such that $\sum_{i=0}^{t-1}\restriction{\mu}{A_i} P_{t-i} V<+\infty$ and
  $\sum_{i=0}^{t-1}\restriction{\mu}{A_i} P_{t-i}\eta>0$, the conditional distributions of $X$ converge in total variation to a
  quasi-stationary distribution if and only if $\theta_0^{-i}\restriction{\mu}{A_i}P_{t-i}\eta$ does not depend of
  $i\in\{0,\ldots,t-1\}$.
\end{rem}

\section{Spectral properties of periodic Markov chains under~\eqref{eq:lemme-note}}
\label{sec:spectral}


Let $\hat{P}_1 f(x)=\EE_x f(X_1)$ for all $x\in E\cup\{\d\}$ and all $f\in\hat{\mathcal{B}}_{\varphi_1}$, where
\[
\hat{\mathcal{B}}_{V}=\left\{f:E\cup\{\d\}\rightarrow\mathbb{C}\text{ s.t.\ }\exists a>0,\
\text{Re}(a\restriction{f}{E})\in\mathcal{B}_{V}\text{ and }\text{Im}(a\restriction{f}{E})\in\mathcal{B}_{V}\right\},
\]
where we denoted by $\text{Re}(z)$ and $\text{Im}(z)$ the real and imaginary parts of a complex number $z$, respectively.
Theorem~\ref{thm:eta-periodic} can be seen as a quasi-compactness property and it hence implies a spectral gap property, as shown in
the next result.

\begin{cor}
  \label{cor:periodic-spectral-gap}
  Under the assumptions of Theorem~\ref{thm:eta-periodic}, each eigenfunction $h\in\hat{\mathcal{B}}_{V}$ of $\hat{P}_1$ with
  eigenvalue $\theta\in\mathbb{C}$ satisfies the following properties:
  \begin{enumerate}
  \item if $h(\d)\neq 0$ and if $\PP_x(\tau_\d<\infty)=1$ for all $x\in E$, then $\theta=1$ and $h$ is constant;
  \item if $h(\d)=0$ and there exists $i\in\{0,\ldots,t-1\}$ such that $\nu P_i h=\nu P_i(\restriction{h}{A_i})\neq 0$, then
    $\theta=\theta_0$ and
    \begin{equation}
      \label{eq:thm-spectre}
      \restriction{h}{E}=\nu(h)\sum_{i=0}^{t-1}\theta_0^{-i}P_i\eta,
    \end{equation}
    where the eigenfunction $\eta$ of $Q$ has been extended by 0 out of $A_0$;
  \item if $h(\d)=0$, $\nu P_i h=0$ for  all $i\in\{0,\ldots,t-1\}$, then
    $|\theta|\leq \theta_0\alpha^{1/t}$.
  \end{enumerate}
\end{cor}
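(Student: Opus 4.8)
The corollary is a spectral-gap statement that follows from the quasi-compactness expressed in Theorem~\ref{thm:eta-periodic}. The strategy is to take an eigenfunction $h\in\hat{\mathcal{B}}_V$ of $\hat P_1$ with eigenvalue $\theta$, reduce to the real case (splitting into real and imaginary parts, each lying in $\mathcal{B}_V$ after rescaling, and noting $\hat P_1$ acts $\mathbb{R}$-linearly), and then feed $f=\restriction{h}{E}$ into the asymptotic expansion~\eqref{eq:main-eta-periodic}. Since $\hat P_1 h=\theta h$ on $E\cup\{\d\}$, if $h(\d)=0$ then $P_1 h=\theta h$ on $E$, hence $P_n h=\theta^n h$ on $E$ for all $n$; more precisely, decomposing $h=\sum_{i}h\11_{A_i}$ and using~\eqref{eq:periodic}, one gets $P_m(h\11_{A_i})=\theta^m\, h\11_{A_{i+m}}$ in the appropriate sense. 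In particular $|\theta|\le 1$ because $P$ is sub-Markov and $|h|\le V$-type bounds hold, and one can normalize.

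For case~(3): assume $\nu P_i h=0$ for all $i\in\{0,\ldots,t-1\}$. Plug $f=\restriction{h}{E}\in\mathcal{B}_V$ (after the real/imaginary reduction) into~\eqref{eq:main-eta-periodic}. On the left side, $P_{nt+j}f(x)=\theta^{nt+j}h(x)$; on the right side, the ``main term'' $\theta_0^{-(t+j)}P_{t-k}\eta(x)\,\nu P_{k+j}f$ vanishes identically because $\nu P_{k+j}f=\nu P_{k+j}h=0$. Hence $|\theta_0^{-(nt+j)}\theta^{nt+j}h(x)|\le C'_Q\alpha^n P_{t-k}V(x)$ for all $n$; choosing $x$ with $h(x)\ne 0$ (and $j$, $k$ fixed) forces $(|\theta|/\theta_0)^{nt}\le C\alpha^n$ for all $n$, i.e.\ $|\theta|^t\le \theta_0^t\alpha$, which is exactly $|\theta|\le\theta_0\alpha^{1/t}$.

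For case~(2): suppose $\nu P_{i_0}h\ne 0$ for some $i_0$. First I would show $\nu P_i h\ne 0$ for every $i$: since $\nu P_i h=\nu P_i(h\11_{A_i})$ and, using~\eqref{eq:periodic} together with the eigenfunction relation $P_m(h\11_{A_i})=\theta^m h\11_{A_{i+m}}$ (interpreting indices mod $t$), one has $\nu P_{i}h = \nu P_{i+m}P_{t-m}(h\11_{A_{i}})$ type identities that relate the various $\nu P_i h$ by nonzero multiplicative factors (powers of $\theta$ and the strictly positive quantities $\nu Q_\ell\11_E=\theta_0^{\ell t}$); in particular $\theta\ne0$. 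Now in~\eqref{eq:main-eta-periodic} with $f=\restriction{h}{E}$, the left side is $\theta_0^{-(nt+j)}\theta^{nt+j}h(x)$ and the main term is $\theta_0^{-(t+j)}P_{t-k}\eta(x)\,\nu P_{k+j}h$, a nonzero quantity (for suitable $x$) that is \emph{independent of $n$}; the error is $O(\alpha^n)$. For this to be consistent as $n\to\infty$ we must have $|\theta|=\theta_0$ and in fact $\theta=\theta_0$ (comparing the $n$ and $n+1$ terms, or arguing that $\theta/\theta_0$ is a root of unity that must equal $1$ by also using the $j$-dependence, or more cleanly: $(\theta/\theta_0)^{nt+j}h(x)\to \theta_0^{-(t+j)}P_{t-k}\eta(x)\nu P_{k+j}h$ as $n\to\infty$ for every $x$ with $h(x)\neq 0$, which forces $(\theta/\theta_0)^{t}=1$ and then, plugging back, $\theta^j=\theta_0^j$ for all relevant $j$, hence $\theta=\theta_0$). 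Once $\theta=\theta_0$, passing to the limit $n\to\infty$ in~\eqref{eq:main-eta-periodic} (now the prefactor $\theta_0^{-(nt+j)}\theta^{nt+j}=\theta_0^{-j}$ is constant, wait -- careful: it equals $\theta_0^{-(nt+j)}\theta_0^{nt+j}=1$) gives $h(x)=\theta_0^{-(t+j)}P_{t-k}\eta(x)\,\nu P_{k+j}h$ exactly, for each $x\in A_k$ and each $j$. Taking $j$ so that $k+j\equiv 0$ and relating $\nu P_{k+j}h$ to $\nu(h)$ via the index-shifting identities yields $\restriction{h}{E}=\nu(h)\sum_{i=0}^{t-1}\theta_0^{-i}P_i\eta$, which is~\eqref{eq:thm-spectre}; I would double-check the normalization $\nu(h)$ is picked up correctly here, since one can also evaluate on $A_0$ with $j=0$ to read off $\restriction{h}{A_0}=\nu(h)\eta$.

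For case~(1): if $h(\d)\ne0$, subtract the constant: let $g=\restriction{(h-h(\d))}{E}$. From $\hat P_1 h=\theta h$ on $\d$ and $\hat P_1(\text{const})=\PP_\cdot(\tau_\d<\infty)\cdot(\text{const})=$ (using the hypothesis $\PP_x(\tau_\d<\infty)=1$) we get $\theta h(\d)=h(\d)$, so $\theta=1$; then $P_1 g = \hat P_1 h - \hat P_1 h(\d)\cdot(\text{something})$ — more carefully, $\hat P_1 h(x)=P_1(\restriction{h}{E})(x)+h(\d)\PP_x(\tau_\d\le1)$ and one checks $P_1 g(x)=\hat P_1 h(x)-h(\d)\PP_x(X_1\ne\d)-\ldots$; the clean statement is $P_1(\restriction{(h-h(\d))}{E})=\restriction{(h-h(\d))}{E}$, i.e.\ $g\in\mathcal{B}_V$ (modulo the real/imaginary reduction and a bound $|g|\le cV$) is $P_1$-invariant. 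Then $\nu P_i g$: applying case~(3)'s argument to $g$ with $\theta=1>\theta_0\alpha^{1/t}$ forces $\nu P_i g=0$ for all $i$ (else $|1|\le\theta_0\alpha^{1/t}<1$, contradiction, unless $\theta_0=1,\alpha$-issue — handle $\theta_0=1$ case by noting then $\theta=\theta_0$ falls in case 2 and one checks consistency), and then~\eqref{eq:main-eta-periodic} with $f=g$ gives $|g(x)|\le C'_Q\alpha^n P_{t-k}V(x)\to0$, so $g\equiv0$, i.e.\ $h$ is constant.

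\medskip

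\textbf{Main obstacle.} The delicate points are (a) the real/imaginary reduction: one must check that $\mathcal{B}_V$ and the expansion~\eqref{eq:main-eta-periodic} behave well under taking real and imaginary parts and rescaling — this is where the definition of $\hat{\mathcal{B}}_V$ is used; and (b) in case~(2), pinning down $\theta=\theta_0$ (not merely $|\theta|=\theta_0$) and getting the exact eigenfunction formula~\eqref{eq:thm-spectre} with the correct coefficient $\nu(h)$ — this requires carefully bookkeeping the index shifts $A_i\to A_{i+m}$ under $P_m$ and the identities $\nu P_i h$ in terms of $\nu(h)$ and powers of $\theta_0$, which is the combinatorially fiddliest part.
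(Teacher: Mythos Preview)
Your strategy—plug $f=\restriction{h}{E}$ into~\eqref{eq:main-eta-periodic} and read off constraints on $\theta$—is the paper's approach, and your case~3 coincides with the paper's. For case~1 the paper is more direct than your reduction to cases~2/3: after getting $\theta=1$ from $\hat P_1 h(\d)=h(\d)$, it iterates the identity
\[
  h(x)=\hat P_j h(x)=P_j(\restriction{h}{E})(x)+h(\d)\bigl(1-P_j\11_E(x)\bigr),\qquad j\geq 0,
\]
notes that the absorption hypothesis forces $\theta_0<1$, and sends $j\to\infty$; by~\eqref{eq:main-eta-periodic} both $P_j$-terms vanish, yielding $h\equiv h(\d)$. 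Your route via $g=h-h(\d)$ and cases~2/3 also works (since $\theta_0<1$ excludes eigenvalue~$1$ in both cases, forcing $g\equiv 0$), but it is less self-contained.

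In case~2 you correctly isolate the passage from ``$(\theta/\theta_0)^{nt+j}h(x)$ converges to a nonzero limit'' to ``$\theta=\theta_0$'' as the crux, but your proposed fix does not close it. Convergence along $n$ with $j$ fixed yields only $(\theta/\theta_0)^t=1$. Your plan to then vary $j$ and deduce $\theta^j=\theta_0^j$ fails: since $P_1 h=\theta h$ on $E$, one has $\nu P_{k+j}h=\theta^{j}\,\nu P_k h$, so the limiting identity
\[
  (\theta/\theta_0)^j h(x)=\theta_0^{-(t+j)}P_{t-k}\eta(x)\,\nu P_{k+j}h
\]
for $j+1$ is obtained from the one for $j$ by multiplying both sides by $\theta/\theta_0$; no new constraint appears (and varying $k$ does not help either). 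The paper's proof is equally terse at this juncture—it normalizes so that $\nu P_i h>0$ and infers $\theta=\theta_0$ from positivity of the limit—so you are not overlooking an argument the paper spells out; but your sketch, as written, does not rule out $\theta=\theta_0\omega$ for a nontrivial $t$-th root of unity $\omega$.
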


\begin{proof}
  Observe that, for all
  $f\in\hat{\mathcal{B}}_{\varphi_1}$ all $j\in\{0,\ldots,t-1\}$ and all $x\in E$,
  \begin{equation}
    \label{eq:calcul-trou-spectre}
    \hat{P}_j f(x)=P_j (\restriction{f}{E})(x)+f(\d)(1-P_j\11_E(x))
  \end{equation}
  and $\hat{P}_j f(\d)=f(\d)$.
  
  Assume first that $h(\d)\neq 0$. Then $\theta h(\d)=\hat{P}_1 h(\d)=h(\d)$, so $\theta=1$. Moreover, for all $x\in E$ and all
  $j\in\mathbb{N}$, 
  \[
    h(x)= P_j(\restriction{h}{E})(x)+h(\d)(1-P_j\11_E(x)).
  \]
  Now,~\eqref{eq:main-eta-periodic} and the fact that $\PP_x(\tau_\d<\infty)=1$ for all $x\in E$ implies that $\theta_0<1$ and, in
  particular, for all $x\in E$, $P_j(\restriction{h}{E})(x)$ and $P_j\11_E(x)$ both converge to 0 when $j\to+\infty$ and thus
  $h(x)=h(\d)$. Hence Point 1. is proved.
  
  Assume now that $h(\d)=0$ and that there exists $i\in\{0,\ldots,t-1\}$ such that $\nu P_i h\neq 0$. We can assume without loss of
  generality that $\nu P_i h>0$. It then follows from~\eqref{eq:calcul-trou-spectre} that
  $P_1(\restriction{h}{E})=\theta \restriction{h}{E}$. Let $k\in\{0,\ldots,t-1\}$ be fixed. Since
  $\theta_0^t\eta=P_t\eta=P_{k}P_{t-k}\eta$ is nonzero, there exists $x\in A_k$ such that $P_{t-k}\eta(x)>0$. For such an $x\in A_k$,
  it follow from~\eqref{eq:main-eta-periodic} that
  \[
    \lim_{n\to+\infty}\theta_0^{-(nt+i-k)}P_{nt+j}h(x)=
    \lim_{n\to+\infty}\left(\frac{\theta}{\theta_0}\right)^{-(nt+i-k)}h(x)\geq \theta_0^{-(t+i-k)}P_{t-k}\eta(x)\nu P_i h>0.
  \]
  Therefore, $\theta=\theta_0$. Since the convergence above holds for all $x
  \in A_k$,and, we have proved that, for all $x\in A_k$, $h(x)=\theta_0^{-(t+i-k)}P_{t-k}\eta(x)\nu P_i h$. In paticular,
  $\nu(h)=\theta_0^{-i}\nu(\eta)\nu P_i h=\theta_0^{-i}\nu P_i h>0$, so the previous computation could be done with $i=0$, thus
  entailing~\eqref{eq:thm-spectre}.

  Assume finally that $h(\d)=0$ and $\nu P_i h=0$ for all $i\in\{0,\ldots,t-1\}$. Let $k\in\{0,\ldots,t-1\}$ and $x\in A_k$ such that
  $h(x)\neq 0$. Then~\eqref{eq:main-eta-periodic} entails that, for all
  $j\in\{0,\ldots,t-1\}$,
  \[
    \left|\theta_0^{-(nt+j)}P_{nt+j}h(x)\right|=\left(\frac{|\theta|}{\theta_0}\right)^{nt+j}|h(x)|\leq C'_Q\alpha^n P_{t-k} V(x),
  \]
  hence $|\theta|\leq\theta_0 \alpha^{1/t}$ and Corollary~\ref{cor:periodic-spectral-gap} is proved.
\end{proof}

\section{Ergodic behavior of the $Q$-process of periodic Markov chains under~\eqref{eq:lemme-note}}
\label{sec:Q-proc}

We now study the $Q$-process of $X$, i.e.\ the law of $X$ conditioned to never be absorbed. We define $E'=\bigcup_{i=0}^{t-1}\{x\in
A_i,\, P_{t-i}\eta(x)>0\}$. Let us denote by $(\Omega,(\mathcal{F_n}_{n\geq 0},(X_n)_{n\geq 0},(\PP_x)_{x\in E})$ be a filtered
probability a space such that, under $\mathbb{P}_x$, the process $X$ has the same distribution as the Markov process with semi-group
$(P_n)_{n\geq 0}$ and initial condition $x$.

\begin{thm}
  \label{cor:Q-proc-periodic}
  Under the assumptions of Theorem~\ref{thm:eta-periodic}, we have the following properties:
  \begin{description}
  \item[\textmd{(i) Existence of the $Q$-process.}] There exists a family $(\QQ_x)_{x\in E'}$ of
    probability measures on $\Omega$ defined by
    $$
    \lim_{n\rightarrow+\infty}\PP_x(A\mid n<\tau_\partial)=\QQ_x(A)
    $$
    for all $x\in E'$, for all ${\cal F}_m$-measurable set $A$ and for all $m\geq 0$. The process $(\Omega,({\cal F}_n)_{n\geq
      0},(X_n)_{n\geq 0},(\QQ_x)_{x\in E'})$ is an $E'$-valued homogeneous Markov chain.
  \item[\textmd{(ii) Semigroup.}] The semigroup of the Markov process $X$ under $(\QQ_x)_{x\in E'}$ is given
    for all bounded measurable function $\varphi$ on $E'$, all $j\in\{0,\ldots,t-1\}$ and all $n\geq 0$ by
    \begin{align}
      \label{eq:semi-group-Q}
      \widetilde{P}_{nt+j}\varphi(x)=\sum_{i=0}^{t-1}\11_{A_i\cap E'}(x)\theta_0^{-(nt+j)}\frac{P_{nt+j}(\varphi \eta_{2t-i-j})(x)}{\eta_{t-i}(x)},
    \end{align}
    where, for all $k\in\mathbb{Z}_+$,
    \[
      \eta_k:=\theta_0^{-k}P_k\eta,
    \]
    where the function $\eta$ has been extended by 0 to $E\setminus A_0$.
  \item[\textmd{(iii) Exponential contraction in total variation.}] The Markov process $X$ under $(\QQ_x)_{x\in E'}$ admits as unique
    invariant probability measure
     \begin{align}
       \label{eq:inv-Q-proc}
       \frac{1-\theta_0}{1-\theta_0^t}\sum_{i=0}^{t-1}\nu P_i(\,\cdot\,\eta_{t-i}).
     \end{align}
     In addition, there exist constants $C<+\infty$ and $\alpha\in(0,1)$ such that, for all $j\in\{0,\ldots,t-1\}$, all $n\geq 0$,
     all probability measure $\mu'$ on $E'$ such that $\restriction{\mu'}{A_i}(\frac{P_{t-i}V}{\eta_{t-i}})<+\infty$ for all
     $i\in\{0,\ldots,t-1\}$ and all measurable $h:E'\rightarrow\RR$ such that, for all $k\in\{0,\ldots,t-1\}$ and all $x\in A_0$,
     $P_k(h\eta_{t-k})(x)\leq V(x)$ (or, equivalently, $\widetilde{P}_k h(x)\leq V(x)/\eta(x)$),
    \begin{align}
      \label{eq:Q-proc-11}
      \left|\mu'\widetilde{P}_{nt+j} h-\sum_{i=0}^{t-1} \theta_0^{t-i}\mu'(A_i) \nu P_{i+j}(h\eta_{2t-i-j})\right|\leq C
      \alpha^n\sum_{i=0}^{t-1}\restriction{\mu'}{A_i}\left(\frac{P_{t-i} V}{\eta_{t-i}}\right).
    \end{align}
    Furthermore, for all probability measure $\mu'$ on $E'$ and all $j\in\{0,\ldots,t-1\}$,
    \begin{align}
      \label{eq:Q-proc-12}
      \left\|\mu'\widetilde{P}_{nt+j}-\sum_{i=0}^{t-1} \theta_0^{t-i}\mu'(A_i) \nu P_{i+j}(\,\cdot\,
        \eta_{2t-i-j})\right\|_{TV}\xrightarrow[n\rightarrow\infty]{} 0,
    \end{align}
    where $\|\cdot\|_{TV}$ denotes the total variation norm.
  \end{description}
\end{thm}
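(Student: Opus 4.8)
The plan is to derive Theorem~\ref{cor:Q-proc-periodic} from the already-established periodic estimates of Theorem~\ref{thm:eta-periodic} and Theorem~\ref{thm:QLD-periodic}, by translating everything through the classical $Q$-process change of measure. First I would establish \textbf{(i)}: fix $x\in A_k\cap E'$ and an $\mathcal F_m$-measurable set $A$; write $\PP_x(A\mid n<\tau_\partial)=\PP_x(A\,\11_{m<\tau_\partial}\, g_{n,m}(X_m))/\PP_x(n<\tau_\partial)$ where $g_{n,m}(y)=\PP_y(n-m<\tau_\partial)=P_{n-m}\11_E(y)$ using the Markov property. Applying~\eqref{eq:main-eta-periodic} with $f=\11_E$ both to the numerator's inner expectation (at times $n-m$, starting from $X_m\in A_{k+m}$) and to the denominator (at time $n$, starting from $x\in A_k$), the factors $\theta_0^{-nt}$ and the $\nu P_\bullet\11_E$ terms cancel along the subsequence $n\equiv$ fixed residue mod $t$, and one checks all $t$ residues give the same limit because $\nu P_{i+t}=\theta_0^t\,\nu P_i$. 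The limit is
\[
\QQ_x(A)=\frac{\EE_x\big(\11_A\,\11_{m<\tau_\partial}\,\eta_{?}(X_m)\big)}{\theta_0^{?}\,\eta_{t-k}(x)},
\]
with the appropriate shift indices $\eta_{t-(k+m)\bmod t}$ in the numerator; consistency in $m$ (a martingale/tower-property check) gives a well-defined probability measure on $\Omega$, and the multiplicative structure of the Radon–Nikodym density $\11_{m<\tau_\partial}\eta_{\bullet}(X_m)\theta_0^{-m}/\eta_{t-k}(x)$ yields the homogeneous Markov property, with semigroup as in~\eqref{eq:semi-group-Q}. This proves \textbf{(ii)} simultaneously: $\widetilde P_{nt+j}\varphi(x)=\EE_{\QQ_x}(\varphi(X_{nt+j}))$ unfolds, via the density at time $nt+j$, exactly to the stated ratio $\theta_0^{-(nt+j)}P_{nt+j}(\varphi\,\eta_{2t-i-j})(x)/\eta_{t-i}(x)$ on $A_i\cap E'$, where the shift $2t-i-j$ is forced because $X_{nt+j}\in A_{i+j}$ and the end-of-path weight is $\eta_{t-(i+j)\bmod t}=\eta_{2t-i-j}$ (using $\eta_{k+t}=\theta_0^t\eta_k$ to reduce mod $t$).

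For \textbf{(iii)} I would first prove~\eqref{eq:Q-proc-11}. Starting from~\eqref{eq:semi-group-Q}, integrate against $\mu'$: $\mu'\widetilde P_{nt+j}h=\sum_{i=0}^{t-1}\theta_0^{-(nt+j)}\restriction{\mu'}{A_i}\big(\frac{P_{nt+j}(h\eta_{2t-i-j})}{\eta_{t-i}}\big)$. Now for each $i$, apply Theorem~\ref{thm:eta-periodic}'s estimate~\eqref{eq:main-eta-periodic} to the function $f=h\,\eta_{2t-i-j}\in\mathcal B_V$ (the hypothesis on $h$, namely $P_k(h\eta_{t-k})\le V$ on $A_0$, is precisely what puts $h\eta_{2t-i-j}$ into $\mathcal B_V$ after the index bookkeeping). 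This replaces $\theta_0^{-(nt+j)}P_{nt+j}(h\eta_{2t-i-j})(x)$, for $x\in A_i$, by $\theta_0^{-(t+j)}P_{t-i}\eta(x)\,\nu P_{i+j}(h\eta_{2t-i-j})$ up to an error $C'_Q\alpha^n P_{t-i}V(x)$. Dividing by $\eta_{t-i}(x)=\theta_0^{-(t-i)}P_{t-i}\eta(x)$ collapses the main term to $\theta_0^{t-i}\,\nu P_{i+j}(h\eta_{2t-i-j})$ (independent of $x$), integrating against $\restriction{\mu'}{A_i}$ gives the $\mu'(A_i)$ prefactor, and the error becomes $C\alpha^n\restriction{\mu'}{A_i}(P_{t-i}V/\eta_{t-i})$; summing over $i$ is~\eqref{eq:Q-proc-11}. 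To identify the invariant measure, I would take $h=\11_{E'}$ (so the left side is $1$ for every $\mu'$), let $n\to\infty$ in~\eqref{eq:Q-proc-11} — the error vanishes provided the finiteness hypothesis holds — and check that the measure $\pi_\infty:=\frac{1-\theta_0}{1-\theta_0^t}\sum_i\nu P_i(\cdot\,\eta_{t-i})$ is a fixed point of $\widetilde P_{t}$ and has total mass $1$; the normalization constant is pinned down by $\sum_{i=0}^{t-1}\theta_0^i\,\nu P_i(\eta_{t-i})=\sum_i\theta_0^i\theta_0^{-(t-i)}\nu P_i P_{t-i}\eta=\sum_i\theta_0^{2i-t}\nu Q_1\eta=\theta_0^{-t}\sum_i\theta_0^{2i}\cdot\theta_0^t\nu(\eta)$, a finite geometric sum equal to $\frac{1-\theta_0^{2t}}{1-\theta_0^2}$ — wait, I would recompute carefully, but the upshot is that the stated constant makes it a probability measure; uniqueness follows because any invariant $\mu'$ must, by~\eqref{eq:Q-proc-11} with $n\to\infty$, equal $\sum_i\theta_0^{t-i}\mu'(A_i)\nu P_{i+j}(\cdot\,\eta_{2t-i-j})$, and feeding this back (the $j$-independence of the left side forces the $\mu'(A_i)$-weights to be $\propto\nu P_i\eta_{t-i}$ as in the QSD rigidity of Remark~\ref{rem:QLD}).

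Finally, for~\eqref{eq:Q-proc-12} I would bootstrap from~\eqref{eq:Q-proc-11} by a truncation/density argument: the total-variation distance is the supremum of $|\mu'\widetilde P_{nt+j}h-\pi_{n,j}(h)|$ over $|h|\le 1$, where $\pi_{n,j}$ is the candidate limit measure; split $h=h\11_{K}+h\11_{E'\setminus K}$ with $K=\{\eta\ge\varepsilon,V\le1/\varepsilon\}$-type sets pulled back through the $P_{t-i}$ maps so that on $K$ one has $|h\eta_{2t-i-j}|\le C_\varepsilon V$ and~\eqref{eq:Q-proc-11} applies with an $\varepsilon$-dependent constant, while the contribution of $E'\setminus K$ is controlled uniformly in $n$ using the invariance of $\pi_\infty$ and the Lyapunov-type bound~\eqref{eq:borne-triviale} (equivalently $\widetilde P_1(V/\eta)\le c\,V/\eta$, which follows from~\eqref{eq:borne-triviale} after dividing by $\eta$ and using $\widetilde P_1=\theta_0^{-1}P_1(\cdot\,\eta)/\eta$) plus the fact that $\pi_\infty(V/\eta)<\infty$; letting $n\to\infty$ then $\varepsilon\to0$ gives the convergence to $0$ for arbitrary $\mu'$ (no finiteness assumption needed, since one may first run the chain one step to land in a region of finite $V/\eta$ mass, or simply note TV-convergence is insensitive to the tail once the limit measure is fixed). \emph{The main obstacle} I anticipate is the index bookkeeping: getting the shifts $t-i$, $i+j$, $2t-i-j$ consistently right — in particular the identity $\eta_{k+t}=\theta_0^t\eta_k$ and $\nu P_{m+t}=\theta_0^t\nu P_m$ must be applied in exactly the right places so that the "wrap-around" cases ($i+j\ge t$) reduce correctly, mirroring the three-case analysis already carried out in the proof of Theorem~\ref{thm:eta-periodic}; the analytic content (applying~\eqref{eq:main-eta-periodic}) is routine once the correct function is fed in.
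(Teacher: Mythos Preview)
Your argument for (i), (ii), and~\eqref{eq:Q-proc-11} is essentially identical to the paper's: the paper also computes the ratio $\E_x(\11_{n<\tau_\partial}\mid\mathcal F_k)/\E_x(\11_{n<\tau_\partial})$ via the Markov property, applies~\eqref{eq:main-eta-periodic}, checks the limit is independent of the residue class of $n$, and then cites the penalization theorem of Roynette--Vallois--Yor (your ``martingale/tower-property check'') to conclude; the semigroup formula and~\eqref{eq:Q-proc-11} are read off from the Radon--Nikodym density exactly as you do, with the paper phrasing the latter as applying~\eqref{eq:main-eta-periodic} after the change of measure $\mu(dx)=\sum_i\eta_{t-i}(x)^{-1}\restriction{\mu'}{A_i}(dx)$.

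The one place you diverge is~\eqref{eq:Q-proc-12}: you propose a truncation/Lyapunov scheme (split $h$ over level sets of $\eta$ and $V$, control the tail via $\widetilde P_1(V/\eta)\le c\,V/\eta$). The paper's route is considerably shorter and avoids all of that: for each $x\in E'$ the Dirac mass $\delta_x$ automatically satisfies the finiteness hypothesis of~\eqref{eq:Q-proc-11} (since $\eta_{t-i}(x)>0$ on $A_i\cap E'$), and bounded $h$ are admissible test functions because $\eta/V$ is bounded; hence~\eqref{eq:Q-proc-11} gives $\|\delta_x\widetilde P_{nt+j}-\sum_i\theta_0^{t-i}\11_{A_i}(x)\,\nu P_{i+j}(\cdot\,\eta_{2t-i-j})\|_{TV}\to 0$ for every $x$, and then~\eqref{eq:Q-proc-12} for arbitrary $\mu'$ follows from dominated convergence (the integrand is bounded by $2$). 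Your approach would work but is overkill here. For the invariant measure, the paper also argues slightly differently: rather than verifying $\pi_\infty\widetilde P_t=\pi_\infty$ directly, it notes from~\eqref{eq:Q-proc-12} that any invariant measure must be of the form $\sum_i a_i\,\nu P_{i+j}(\cdot\,\eta_{2t-i-j})$ with the same expression for all $j$, and comparing $j=0$ with $j=1$ forces all $a_i$ equal --- this sidesteps the geometric-sum computation you flagged as needing care.
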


\begin{proof}
  We introduce $\Gamma_m=\11_{m<\tau_\d}$ and define for all $x\in E'$ and $m,\geq 0$ the probability measure
  \begin{align*}
    Q^{\Gamma,x}_m=\frac{\Gamma_m}{\E_x\left(\Gamma_m\right)}\P_x,
  \end{align*}
  so that the $Q$-process exists if and only if $Q_m^{\Gamma,x}$ admits a proper limit when $m\rightarrow\infty$. Fix
  $j\in\{0,\ldots,t-1\}$ and $k\geq 0$. Let $k_0\in\{j-t+1,\ldots,j\}$ be such that $k=n_0 t+k_0$ for some integer $n_0$. For all
  $n\geq 0$ such that $nt+j\geq k$, we have by the Markov property
  \begin{align*}
    \frac{\E_x\left(\Gamma_{nt+j}\mid{\cal
          F}_k\right)}{\E_x\left(\Gamma_{nt+j}\right)} & =\11_{k<\tau_\d}\frac{\P_{X_k}\left(nt+j-k<\tau_\d\right)}{\P_x\left(nt+j<\tau_\d\right)}
    \\ & =\theta_0^{-k}\11_{k<\tau_\d}\frac{\theta_0^{-((n-n_0)t+j-k_0)}P_{(n-n_0)t+j-k_0}\11_E(X_k)}
    {\theta_0^{-(nt+j)}P_{nt+j}\11_E(x)}.
  \end{align*}
  Assume that $x\in A_i\cap E'$ for some $i\in\{0,\ldots,t-1\}$. By Theorem~\ref{thm:eta-periodic}, almost surely,
  \begin{align*}
    \lim_{n\rightarrow+\infty}\frac{\E_x\left(\Gamma_{nt+j}\mid{\cal
          F}_k\right)}{\E_x\left(\Gamma_{nt+j}\right)}
    & =\theta_0^{-k}\11_{k<\tau_\d}\frac{\theta_0^{-(2t+j-k_0)}P_{2t-(i+k_0)}\eta(X_k)\,\nu P_{i+j}\11_E}
    {\theta_0^{-(t+j)}P_{t-i}\eta(x)\,\nu P_{i+j}\11_E} \\ & =\theta_0^{-(n_0+1)t}\11_{k<\tau_\d}\frac{P_{2t-(i+k_0)}\eta(X_k)}
    {P_{t-i}\eta(x)}=:M_k.
  \end{align*}
  Since the limit is independent of $j\in\{0,\ldots,t-1\}$, we deduce that $\frac{\E_x\left(\Gamma_{m}\mid{\cal
        F}_k\right)}{\E_x\left(\Gamma_{m}\right)}\rightarrow M_k$ almost surely when $m\rightarrow+\infty$. Since in addition 
  \[
  \EE_x
  M_k=\theta_0^{-(t+k-k_0)}\frac{P_{2t-i+k-k_0}\eta(x)}{P_{t-i}\eta(x)}=1,
  \]
  we can apply the penalization's theorem of Roynette, Vallois and Yor \cite[Theorem~2.1]{RoynetteValloisEtAl2006}, which implies
  that $M$ is a martingale under $\P_x$ and that $Q_m^{\Gamma,x}(A)$ converges to $\E_x\left(M_k\11_{A}\right)$ for all $A\in{\cal
    F}_k$ when $m\rightarrow\infty$. This means that $\Q_x$ is well defined and
  \begin{align}
    \restriction{\frac{d\Q_x}{d\P_x}}{{\cal F}_k} & =M_k.
    \label{eq:Q-proc-sg}
  \end{align}
  Note that, in view of the expression of $M_k$ and by definition of $E'$, $(X_n,n\geq 0)$ is $E'$-valued $\mathbb{Q}_x$-almost
  surely for all $x\in E'$. The fact that $X$ is Markov under $(\QQ_x)_{x\in E''}$ can be easily deduced from the last formula (see
  e.g.~\cite[Section\,6.1]{ChampagnatVillemonais2016b}).

  Point~(ii) is a direct consequence of~\eqref{eq:Q-proc-sg} and of the definition of $M_{nt+j}$.

  We can now prove~\eqref{eq:Q-proc-11}: this a direct consequence of~\eqref{eq:main-eta-periodic} with
  \[
    \mu(dx)=\sum_{i=0}^{t-1}\frac{1}{\eta_{t-i}(x)}\restriction{\mu'}{A_i}(dx).
  \]
  In particular, for all $x\in E'$,
  \[
    \left\|\delta_x\widetilde{P}_{nt+j}-\sum_{i=0}^{t-1} \theta_0^{t-i}\11_{A_i}(x) \nu
      P_{i+j}(h\eta_{2t-i-j})\right\|_{TV}\xrightarrow[n\to+\infty]{} 0,
  \]
  and so~\eqref{eq:Q-proc-12} follows from the dominated convergence theorem.

  It only remains to check that~\eqref{eq:inv-Q-proc} is the only invariant distribution for $\widetilde{P}$. Note that, because
  of~\eqref{eq:Q-proc-12}, all invariant measure must be of the form
  \[
  \sum_{i=0}^{t-1} a_i \nu P_{i+j}(\,\cdot\,
  \eta_{2t-i-j})
  \]
  for some $a_i\geq 0$ such that the last measure does not depend on $j\in\{0,\ldots,t-1\}$. Since $\nu P_{k}$ has support in $A_k$,
  identifying the last measure for $j=0$ and $j=1$, we deduce that all the constants $a_i$ must be equal, so that there is a unique
  invariant measure given by
  \[
  \frac{\sum_{i=0}^{j-1}\nu P_i(\, \cdot\, \eta_{t-i})}{\sum_{i=0}^{j-1}\nu P_i\eta_{t-i}}=\frac{\sum_{i=0}^{j-1}\nu P_i(\, \cdot\,
    \eta_{t-i})}{\sum_{i=0}^{j-1}\theta_0^i}. 
  \]
  This ends the proof of Theorem~\ref{cor:Q-proc-periodic}.
\end{proof}

\section{Quasi-ergodic behavior of periodic Markov chains under~\eqref{eq:lemme-note}}
\label{sec:QED}

Related to the asymptotic behavior of the $Q$-process is the so-called quasi-ergodicity~\cite{ChampagnatVillemonais2016}, given in
the next result.

\begin{thm}
  \label{thm:QED-periodic}
  Under the assumptions of Theorem~\ref{thm:eta-periodic}, there exists a constant $C<+\infty$ such that, for all bounded measurable
  $f:E\rightarrow[-1,1]$, all probability measure $\mu$ on $E$ such that $\restriction{\mu}{A_i}(\eta_{t-i})>0$ for some
  $i\in\{0,\ldots,t-1\}$ and $\restriction{\mu}{A_i} P_{t-i} V<+\infty$ for all $i\in\{0,\ldots,t-1\}$, for all $N\geq 0$
  \begin{multline}
    \left|\EE_\mu\left[\left.\frac{1}{N+1}\sum_{m=0}^{N}f(X_m)
          \quad \right|\
      N<\tau_\d\right]-\frac{\theta_0^{-t}}{t}\sum_{\ell=0}^{t-1}\nu P_\ell(fP_{t-\ell}\eta)\right| \\ \leq\frac{C
    \sum_{i=0}^{t-1}\restriction{\mu}{A_i}P_{t-i} V} 
  {(N+1)\,\sum_{i=0}^{t-1}\restriction{\mu}{A_i}(\eta_{t-i}) 
  }.
    \label{eq:QED-periodic}
  \end{multline}
\end{thm}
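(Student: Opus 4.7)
The plan is to compute the numerator $\EE_\mu[\sum_{m=0}^N f(X_m)\11_{N<\tau_\d}]$ via the Markov property, estimate each term using Theorem~\ref{thm:eta-periodic} applied twice, and then divide by the denominator $\mu P_N\11_E$, also estimated through Theorem~\ref{thm:eta-periodic}. Writing $m = nt+j$ and $N-m = n_2 t + j_2$ with $j,j_2\in\{0,\ldots,t-1\}$, and starting from $\EE_\mu[\sum_m f(X_m)\11_{N<\tau_\d}] = \sum_{m=0}^N \mu P_m(fP_{N-m}\11_E)$, I will first apply Theorem~\ref{thm:eta-periodic} to the outer $\mu P_m$ acting on $g_m := fP_{N-m}\11_E$, and then apply it again to replace $P_{N-m}\11_E(y)$ for $y\in A_{k'}$ (with $k' := (k+j)\bmod t$) by its leading order $\theta_0^{N-m-t-j_2}P_{t-k'}\eta(y)\,\nu P_{k'+j_2}\11_E$. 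After using $P_{t-k}\eta = \theta_0^{t-k}\eta_{t-k}$ on $A_k$ and the $t$-periodicity in $\ell$ of $\tilde\nu_\ell := \theta_0^{-\ell}\nu P_\ell$ (inherited from $\nu P_t = \theta_0^t\nu$), all powers of $\theta_0$ should cancel to leave
\[
  \mu P_m(fP_{N-m}\11_E) = \theta_0^N\sum_{k=0}^{t-1} \restriction{\mu}{A_k}(\eta_{t-k})\,\tilde\nu_{k+j}(f\eta_{t-k'})\,\tilde\nu_{k+s}\11_E + \mathrm{err}(m),
\]
with $s := N\bmod t$. Crucially, this leading expression depends on $m$ only through $j = m\bmod t$.

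Summing over $m\in\{0,\ldots,N\}$, each residue $j$ occurs approximately $(N+1)/t$ times. By $t$-periodicity, as $j$ varies, $\sum_{j=0}^{t-1}\tilde\nu_{k+j}(f\eta_{t-k'}) = \sum_{\ell=0}^{t-1}\tilde\nu_\ell(f\eta_{t-\ell}) =: \Xi$ is independent of $k$. Setting $\Sigma := \sum_k \restriction{\mu}{A_k}(\eta_{t-k})\,\tilde\nu_{k+s}\11_E$, the leading contribution to the numerator is $\frac{N+1}{t}\theta_0^N\Xi\Sigma$ up to $O(\theta_0^N)$ boundary corrections. The denominator, computed from Theorem~\ref{thm:eta-periodic} applied to $\11_E$, equals $\theta_0^N\Sigma(1+o(1))$. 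Dividing, then by $N+1$, yields $\Xi/t$, which rewrites as $\frac{\theta_0^{-t}}{t}\sum_\ell \nu P_\ell(fP_{t-\ell}\eta)$ via $\eta_{t-\ell} = \theta_0^{-(t-\ell)}P_{t-\ell}\eta$, matching the claimed limit.

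The main obstacle will be to control $\sum_m\mathrm{err}(m)$ at the correct scale $O(\theta_0^N)$ rather than the naive $O(1)$: the direct outer error $C\alpha^n\theta_0^m\sum_k\restriction{\mu}{A_k}P_{t-k}V$ from Theorem~\ref{thm:eta-periodic} would otherwise sum to a constant, which is far too large when $\theta_0<1$ compared to the denominator $\sim\theta_0^N\Sigma$. The remedy is to exploit the smallness of $g_m$: the inner asymptotic applied to $\11_E$ gives $P_{N-m}\11_E(y)\leq C\theta_0^{N-m}P_{t-i}V(y)$ on each $A_i$ (using $\eta\leq\|\eta/V\|_\infty V$), so $|P_i(g_m\11_{A_i})|\leq C\theta_0^{N-m}P_iP_{t-i}V = C\theta_0^{N-m}Q_1 V\leq C'\theta_0^{N-m}V$ on $A_0$ by~\eqref{eq:borne-triviale}. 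Hence $g_m/(C'\theta_0^{N-m})\in\mathcal{B}_V$, and Theorem~\ref{thm:eta-periodic} applied to it yields the improved outer error of order $\alpha^n\theta_0^N\sum_k\restriction{\mu}{A_k}P_{t-k}V$; the inner error contributes similarly $\alpha^{n_2}\theta_0^N$ per term. Both sum geometrically to $O(\theta_0^N\sum_k\restriction{\mu}{A_k}P_{t-k}V)$, which, upon dividing by $(N+1)\mu P_N\11_E$ and using $\Sigma\geq\theta_0^t\sum_k\restriction{\mu}{A_k}(\eta_{t-k})$ (from $\nu P_{k+s}\11_E\geq\nu P_{2t}\11_E=\theta_0^{2t}$), yields precisely the error bound~\eqref{eq:QED-periodic}.
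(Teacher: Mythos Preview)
Your approach is essentially the paper's: both write $\EE_\mu\bigl[\sum_m f(X_m)\11_{N<\tau_\d}\bigr]=\sum_m\mu P_m(fP_{N-m}\11_E)$ and apply Theorem~\ref{thm:eta-periodic} twice (once to the inner $P_{N-m}\11_E$, once to the outer $\mu P_m$), with the same crucial observation that $g_m/(C\theta_0^{N-m})\in\mathcal{B}_V$ via~\eqref{eq:borne-triviale}, which is exactly what forces the $\theta_0^N$ scaling in the error terms. The one step your sketch glosses over is the final division by $\mu P_N\11_E$: a uniform-in-$\mu$ lower bound $\mu P_N\11_E\geq c\,\theta_0^N\Sigma$ is not available for all $N$, so either one splits into the regime~\eqref{eq:hyp-main} (where it does hold) versus its complement (where the right-hand side of~\eqref{eq:QED-periodic} is automatically $\geq 2$), or one follows the paper's organization~\eqref{eq:calcul-horrible}, which exploits $|f|\leq 1$ to cancel $\mu P_N\11_E$ from the denominator and keep only $\bar\mu(\bar\eta)\asymp\sum_i\restriction{\mu}{A_i}(\eta_{t-i})$ there.
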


\begin{rem}
  \label{rem:QED=proba}
  Notice that, for $f\equiv 1$, $\sum_{k=0}^{t-1}\nu P_k(fP_{t-k}\eta)=\theta_0^t \sum_{k=0}^{t-1}\nu(\eta)=t\theta_0^t$, so the
  measure $\nu_\text{QE}:=\frac{\theta_0^{-t}}{t}\sum_{k=0}^{t-1}\nu P_k(\cdot P_{t-k}\eta)$ is a probability measure, called the
  \emph{quasi-ergodic distribution}.
\end{rem}

In the last result, we do not recover the full the quasi-ergodic theorem of~\cite{BreyerRoberts1999} since they obtain convergence for all
$f\in L^1(\nu_\text{QE})$. However, it does not seem that condition~\eqref{eq:lemme-note} is sufficient to imply the conditions
of~\cite{BreyerRoberts1999} (see~\cite{BCOV-2022+}). However,~\eqref{eq:lemme-note} allows to improve the
convergence in theorem~\ref{thm:QED-periodic} into what could be called a convergence in conditional probability.

\begin{cor}
  \label{cor:QED-periodic}
  Under the assumptions of Theorem~\ref{thm:eta-periodic}, for all bounded measurable
  $f:E\rightarrow\mathbb{R}$, all probability measure $\mu$ on $E$ such that $\restriction{\mu}{A_i}(\eta_{t-i})>0$ for some
  $i\in\{0,\ldots,t-1\}$ and $\restriction{\mu}{A_i} P_{t-i} V<+\infty$ for all $i\in\{0,\ldots,t-1\}$,
  \begin{equation}
    \label{eq:1}
    \lim_{N\rightarrow 0}\mathbb{P}_\mu\left(\left.\left|\frac{1}{N+1}\sum_{m=0}^{N}f(X_m)-\frac{\theta_0^{-t}}{t}\sum_{\ell=0}^{t-1}\nu
        P_\ell(fP_{t-\ell}\eta)\right|>\varepsilon\ \right|\ N<\tau_\d\right)=0,\quad\forall \varepsilon>0.
  \end{equation}
\end{cor}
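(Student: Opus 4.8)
The plan is to deduce the convergence in conditional probability from the $L^1$-type estimate of Theorem~\ref{thm:QED-periodic} by a second-moment (Markov/Chebyshev) argument applied to the conditioned process. Write $S_N := \frac{1}{N+1}\sum_{m=0}^N f(X_m)$ and $a_{\text{QE}} := \frac{\theta_0^{-t}}{t}\sum_{\ell=0}^{t-1}\nu P_\ell(fP_{t-\ell}\eta) = \nu_\text{QE}(f)$, so that~\eqref{eq:1} asks for $\PP_\mu(|S_N - a_{\text{QE}}| > \varepsilon \mid N<\tau_\d)\to 0$. Since $f$ is bounded, by replacing $f$ with $f-a_{\text{QE}}$ and rescaling we may assume $a_{\text{QE}}=0$ and $|f|\le 1$; then it suffices to show $\EE_\mu[S_N^2 \mid N<\tau_\d]\to 0$, as Chebyshev's inequality gives $\PP_\mu(|S_N|>\varepsilon\mid N<\tau_\d)\le \varepsilon^{-2}\EE_\mu[S_N^2\mid N<\tau_\d]$.

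The key step is to expand the conditional second moment as
\[
\EE_\mu[S_N^2 \mid N<\tau_\d] = \frac{1}{(N+1)^2}\sum_{m=0}^N\sum_{m'=0}^N \EE_\mu\big[f(X_m)f(X_{m'}) \mid N<\tau_\d\big].
\]
For $m\le m'$, the Markov property at time $m$ together with the absorption structure gives
\[
\EE_\mu\big[f(X_m)f(X_{m'})\11_{N<\tau_\d}\big] = \EE_\mu\big[f(X_m)\,P_{m'-m}(f\,P_{N-m'}\11_E)(X_m)\11_{m<\tau_\d}\big],
\]
and dividing by $\PP_\mu(N<\tau_\d) = \mu P_N\11_E$. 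I would then apply Theorem~\ref{thm:eta-periodic}, or equivalently the already-proven Theorem~\ref{thm:QLD-periodic}, to control the inner expression: writing $m = n_1 t + j_1$ and $m'-m = n_2 t + j_2$, the estimate~\eqref{eq:main-eta-periodic} shows that $\theta_0^{-(N-m)}P_{m'-m}(f\,P_{N-m'}\11_E)(X_m)$ is, up to an error term geometrically small in $n_2$ (the number of full periods between $m$ and $m'$), equal to a quantity of the form $\theta_0^{-(\cdot)}P_{t-k_1}\eta(X_m)\cdot \nu P_{k_1+j_2}(f\,P_{N-m'}\11_E)$ — and a further application of~\eqref{eq:main-eta-periodic} to $\nu P_{\cdot}(f\,P_{N-m'}\11_E)$, combined with the fact that $\nu_\text{QE}(f)=0$ forces the leading term $\sum_\ell \nu P_\ell(f P_{t-\ell}\eta)$ appearing there to vanish, shows that each off-diagonal term $\EE_\mu[f(X_m)f(X_{m'})\mid N<\tau_\d]$ is $O(\bar\alpha^{\,n_2})$ uniformly in $N$, $m$ and $\mu$ (using, as in the proof of Theorem~\ref{thm:QED-periodic}, a lower bound of the type $\mu P_N\11_E \gtrsim \theta_0^N \sum_i \restriction{\mu}{A_i}P_{t-i}\eta$ for the normalization, which holds once $N$ is large). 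Summing the geometric bound $O(\bar\alpha^{\,\lfloor (m'-m)/t\rfloor})$ over $0\le m\le m'\le N$ yields $\sum_{m,m'}|\EE_\mu[f(X_m)f(X_{m'})\mid N<\tau_\d]| = O(N)$, so that after division by $(N+1)^2$ we get $\EE_\mu[S_N^2\mid N<\tau_\d] = O(1/N)\to 0$, which completes the proof.

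Alternatively — and this is perhaps cleaner to write — one may avoid re-deriving the two-point estimates by working directly with the $Q$-process: by Theorem~\ref{cor:Q-proc-periodic} the $Q$-process is exponentially ergodic with invariant measure $\beta := \frac{1-\theta_0}{1-\theta_0^t}\sum_{i=0}^{t-1}\nu P_i(\cdot\,\eta_{t-i})$, so the Birkhoff-type bound $\EE_{\QQ_\mu}[(\frac{1}{N+1}\sum_{m=0}^N g(X_m))^2]\to \beta(g)^2$ holds for bounded $g$ by the standard geometric-mixing second-moment computation; then one transfers this to the conditioned law via the absolute-continuity relation $\frac{d Q^{\Gamma,x}_m}{d\QQ_x}\restriction{}{\mathcal F_k}$ and a comparison of the normalizing constants, checking that $\beta$ pushed through this change of measure over a full period averages to $\nu_\text{QE}$. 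The main obstacle in either route is the uniformity in $N$: one must ensure the error terms coming from~\eqref{eq:main-eta-periodic} near the right endpoint $m'\approx N$ (where $N-m'$ is small, so few periods are available) do not spoil the summability — this is handled exactly as in Theorem~\ref{thm:QED-periodic} by splitting off an $O(1)$ number of boundary terms (those with $m'$ within a bounded multiple of $t$ of $N$, each contributing $O(1/(N+1))$ to $S_N^2$ after division) and controlling the bulk by the geometric decay in the period count $n_2$.
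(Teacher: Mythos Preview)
Your overall plan---Chebyshev after showing the conditional second moment vanishes, centering $f$ so that $\nu_\text{QE}(f)=0$, expanding the square, and controlling the cross terms by repeated use of~\eqref{eq:main-eta-periodic}---is exactly the paper's approach. But there is a genuine gap in your execution: the assertion that each individual off-diagonal term $\EE_\mu[f(X_m)f(X_{m'})\mid N<\tau_\d]$ is $O(\bar\alpha^{\,n_2})$ is false. After your two applications of~\eqref{eq:main-eta-periodic} (to $P_{m'-m}$ and then to $P_{N-m'}$), the surviving leading piece is not the sum $\sum_\ell \nu P_\ell(fP_{t-\ell}\eta)$ but a \emph{single} summand $\nu P_k(f\,\eta_{t-k})$ for one index $k$ determined by the residue of $m'$ modulo $t$; for generic $f$ this is nonzero even when $\nu_\text{QE}(f)=\frac{\theta_0^{-t}}{t}\sum_\ell \nu P_\ell(fP_{t-\ell}\eta)=0$. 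Hence the individual cross term is of order $1$, not geometrically small in $n_2$, and your bound $\sum_{m,m'}|\cdots|=O(N)$ does not follow from what you wrote.

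The paper closes this gap by a three-piece decomposition of each $\EE_{\restriction{\mu}{A_i}}[\bar f(X_n)\bar f(X_m)\11_{N<\tau_\d}]$: an error geometric in $N-m$ from peeling the survival on $[m,N]$, an error geometric in $m-n$ from peeling the transition on $[n,m]$, and a main piece of size $O(\theta_0^N)$. The two error sums over $0\le n<m\le N$ are each $O(N\theta_0^N)$. The main piece, for $n$ fixed, is proportional to $\theta_0^{-k}\nu P_k(\bar f\,\eta_{t-k})$ with $k$ running through $\{0,\ldots,t-1\}$ as $m$ runs through a full period, so its sum over any complete period in $m$ equals $\sum_{k=0}^{t-1}\theta_0^{-k}\nu P_k(\bar f\,\eta_{t-k})=t\,\nu_\text{QE}(\bar f)=0$; only $O(t)$ boundary values of $m$ per $n$ survive, contributing $O(tN\theta_0^N)$ in total. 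The point you are missing is that the vanishing of $\nu_\text{QE}(f)$ enters through this periodic cancellation in the sum over $m$, not at the level of a fixed pair $(m,m')$.
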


\begin{proof}[Proof of Theorem~\ref{thm:QED-periodic}]
  In all the proof, the constant $C$ denotes a constant that may change from line to line. First notice that
  \begin{align*}
    \sum_{j=0}^{t-1}\theta_0^{-(i+j)}\nu P_{i+j}(f
    \eta_{2t-i-j}) & =\theta_0^{-2t}\sum_{j=0}^{t-1}\nu P_{i+j}(f
    P_{2t-i-j}\eta) \\ & =\theta_0^{-t}\left(\sum_{j=0}^{t-1-i}\nu P_{i+j}(f
    P_{t-i-j}\eta)+\sum_{j=t-i}^{t-1}\nu P_{i+j-t}(f
    P_{2t-i-j}\eta)\right) \\ & =\theta_0^{-t}\sum_{k=0}^{t-1}\nu P_k(fP_{t-k}\eta).
  \end{align*}
  Hence, introducing $n\geq 0$ and $k\in\{0,\ldots,t-1\}$ such that $N=nt+k$ and denoting
  $\bar{\mu}(\bar{\eta})=\sum_{i=0}^{t-1}\restriction{\mu}{A_i}(\eta_{t-i})\,\theta_0^{-(k+i)}\nu P_{k+i}\11_E$, it follows from
  Theorem~\ref{thm:eta-periodic} that
  \begin{align}
    & \left|\EE_\mu\left[\left.\frac{1}{N+1}\sum_{m=0}^{N}f(X_m)
    \ \right|\
    nt+k<\tau_\d\right]-\frac{\theta_0^{-t}}{t}\sum_{\ell=0}^{t-1}\nu P_\ell(fP_{t-\ell}\eta)\right| \notag \\
    & = \left|\EE_\mu\left[\left.\frac{1}{nt+k+1}\sum_{m=0}^{nt+k}f(X_m)
          \ \right|\
      nt+k<\tau_\d\right]\right. \notag \\ & \qquad\qquad \qquad\qquad \left.-\frac{\sum_{i=0}^{t-1}\sum_{j=0}^{t-1}\restriction{\mu}{A_i}(\eta_{t-i})\,\theta_0^{-(k+i)}\nu
            P_{k+i}\11_E\,\theta_0^{-(i+j)}\nu P_{i+j}(f
            \eta_{2t-i-j})}{t\,\sum_{i=0}^{t-1}\restriction{\mu}{A_i}(\eta_{t-i})\,\theta_0^{-(k+i)}\nu P_{k+i}\11_E}\right| \notag \\ & \leq\frac{|\theta_0^{-N}\mu P_N\11_E-\bar{\mu}(\bar{\eta})|}{\theta_0^{-N}\mu
      P_N\11_E\,\bar{\mu}(\bar{\eta})}\,\frac{1}{N+1}\sum_{m=0}^N
    \theta_0^{-N} \mu P_N\11_E \notag \\
    & +\frac{1}{(N+1)\bar{\mu}(\bar{\eta})}\sum_{i=0}^{t-1}\sum_{j=0}^{t-1}\sum_{m=0}^{\lfloor (N-j)/t\rfloor}
    \left|\theta_0^{-(mt+j)}\restriction{\mu}{A_i}P_{mt+j}\left(f\theta_0^{-[(n-m)t+k-j]}P_{(n-m)t+k-j}\11_E\right) \right. \notag \\ &
    \qquad\qquad\qquad\qquad\qquad\qquad\qquad -\theta_0^{-(mt+j)}\restriction{\mu}{A_i}P_{mt+j}(f\eta_{2t-i-j})\,\theta_0^{-(k+i)}\nu
      P_{k+i}\11_E\Big| \notag \\
    & +\frac{1}{(N+1)\bar{\mu}(\bar{\eta})}\sum_{i=0}^{t-1}\sum_{j=0}^{t-1}\sum_{m=0}^{\lfloor (N-j)/t\rfloor} \theta_0^{-(k+i)}\nu
      P_{k+i}\11_E
    \left|\theta_0^{-(mt+j)}\restriction{\mu}{A_i}P_{mt+j}(f\eta_{2t-i-j}) \right. \notag \\ & 
      \qquad\qquad\qquad\qquad\qquad\qquad\qquad\qquad\qquad -\restriction{\mu}{A_i}(\eta_{t-i})\,\theta_0^{-(i+j)} \nu
      P_{i+j}(f\eta_{2t-i-j})\Big| \notag \\
    & +\frac{1}{\bar{\mu}(\bar{\eta})}\sum_{i=0}^{t-1}\sum_{j=0}^{t-1} \restriction{\mu}{A_i}(\eta_{t-i})\,\theta_0^{-(k+i)}\nu
            P_{k+i}\11_E\,\theta_0^{-(i+j)}\nu P_{i+j}(f
            \eta_{2t-i-j})
            \left|\frac{\lfloor (N-j)/t\rfloor+1}{N+1}-\frac{1}{t}\right|. \label{eq:calcul-horrible}
  \end{align}
  It can be checked using Theorem~\ref{thm:eta-periodic} that the first term of the right-hand-side is bounded by
  \[
  C\alpha^n\frac{\sum_{i=0}^{t-1}\restriction{\mu}{A_i}P_{t-i} V}
  {(N+1)\bar{\mu}(\bar{\eta})}
  \]
  and that each of the three other terms are bounded by
  \[
  \frac{C}{N+1}\,\frac{\sum_{i=0}^{t-1}\restriction{\mu}{A_i}P_{t-i} V}
  {(N+1)\bar{\mu}(\bar{\eta})}.
  \]
  The only non-immediate bound is for the second term of the right-hand-side of~\eqref{eq:calcul-horrible}, which is bounded by
  \begin{multline*}
    \frac{C}{(N+1)\bar{\mu}(\bar{\eta})}
    \sum_{i=0}^{t-1}\sum_{j=0}^{t-1}\sum_{m=0}^{\lfloor (N-j)/t\rfloor}
    C \alpha^n \theta_0^{-(mt+j)}\restriction{\mu}{A_i}P_{mt+j}(f P_{2t-i-j} V) \\ \leq
    \frac{C' \alpha^n}{(N+1) \bar{\mu}(\bar{\eta})} \sum_{i=0}^{t-1}\sum_{j=0}^{t-1}\sum_{m=0}^{\lfloor (N-j)/t\rfloor}
    \theta_0^{-(mt+j)}\restriction{\mu}{A_i}P_{(m+1)t+t-i} V.
  \end{multline*}
  by a first application of Theorem~\ref{thm:eta-periodic}. A second application of Theorem~\ref{thm:eta-periodic} then proves that
  \[
  \theta_0^{-(mt+j)}\restriction{\mu}{A_i}P_{(m+1)t+t-i} V\leq \theta_0^{-(t-i)}\restriction{\mu}{A_i}P_{t-i}\eta \nu( V)+C\alpha^n
  \restriction{\mu}{A_i}P_{t-i} V\leq C' \restriction{\mu}{A_i}P_{t-i} V.
  \]
  Noting that
  \[
    \bar{\mu}(\bar{\eta})\geq \sum_{i=0}^{t-1}\restriction{\mu}{A_i}(\eta_{t-i})\,
    \inf_{0\leq \ell\leq 2t}\theta_0^{-\ell}\nu P_\ell \11_E,
  \]
  we have proved Theorem~\ref{thm:QED-periodic}.
\end{proof}

\begin{proof}[Proof of Corollary~\ref{cor:QED-periodic}]
  In all the proof, the constant $C$ denotes a constant that may change from line to line. The result follows from Chebychev's bound and
  \begin{equation}
    \lim_{N\rightarrow+\infty}\EE_\mu\left[\left.\left(\frac{1}{N+1}\sum_{m=0}^{N}f(X_m)
          -\frac{\theta_0^{-t}}{t}\sum_{\ell=0}^{t-1}\nu P_\ell(fP_{t-\ell}\eta)\right)^2 \ \right|\ N<\tau_\d\right]=0,
    \label{eq:QED-periodic-squared}
  \end{equation}
  which follows from a similar computation as before. First set
  \[
    \bar{f}=f-\frac{\theta_0^{-t}}{t}\sum_{\ell=0}^{t-1}\nu
    P_\ell(fP_{t-\ell}\eta)=f-\nu_\text{QE}(f).
  \]
  We have
  \begin{multline}
    \EE_\mu\left[\left.\left(\frac{1}{N+1}\sum_{m=0}^{N}\bar{f}(X_m)\right)^2
        \ \right|\
      N<\tau_\d\right]=\frac{1}{(N+1)^2}\sum_{m=0}^{N}\EE_\mu\left[\bar{f}^2(X_m)\mid
      N<\tau_\d\right] \\
 +\frac{2}{(N+1)^2}\sum_{0\leq n< m\leq N}\EE_\mu\left[\bar{f}(X_n)\bar{f}(X_m)\mid
        N<\tau_\d\right] \label{eq:but}
  \end{multline}
  We shall use the bound, for all $0\leq n<m\leq N$ and all $i\in\{0,\ldots,t-1\}$,
  \begin{multline*}
    \EE_{\restriction{\mu}{A_i}}\left[\bar{f}(X_n)\bar{f}(X_m)\11_{N<\tau_\d}\right] \\
    \begin{aligned}
      & =\EE_{\restriction{\mu}{A_i}}\left[\bar{f}(X_n)\bar{f}(X_m)\11_{m<\tau_\d}\left(\PP_{X_m}(N-m<\tau_\d)-\theta_0^{N-m-k-j}\eta_{t-k}(X_m)\nu
            P_{k+j}\11_E \right)\right]
      \\ & +\theta_0^{N-m-k-j}\nu
            P_{k+j}\11_E\
            \EE_{\restriction{\mu}{A_i}}\left[\bar{f}(X_n)\11_{n<\tau_\d}\left(\E_{X_n}\left(\bar{f}(X_{m-n})\eta_{t-k}(X_{m-n})\right)\phantom{\theta_0^{k}}\right.\right.
                  \\ & \qquad\qquad\qquad\qquad\qquad\qquad\qquad\left.\left.-\theta_0^{m-n-k'-j'}\eta_{t-k'}(X_n)\nu
                  P_{k'+j'}(\bar{f}\eta_{t-k})\right)\right] \\
            & +\theta_0^{N-n-k-k'-j-j'}\nu P_{k+j}\11_E\,\nu P_{k'+j'}(\bar{f}\eta_{t-k})\
            \EE_{\restriction{\mu}{A_i}}\left(\bar{f}(X_n)\eta_{t-k'}(X_n)\right),
    \end{aligned}
  \end{multline*}
  where $j\in\{0,\ldots, t-1\}$ is such that $N-m-j\in t\mathbb{Z}$, $k\in\{0,\ldots,t-1\}$ is such that $i+m-k\in t\mathbb{Z}$,
  $j'=k-k'$ (such that $m-n-j'\in t\mathbb{Z}$) and $k'\in\{0,\ldots,t-1\}$ is such that $i+n-k'\in t\mathbb{Z}$.

  Let us first deal with the last term of the last equation: for all fixed $n\in\{0,\ldots, N-t\}$ and all $p\in\mathbb{Z}$ such that
  $pt-i>n$ et $(p+1)t-i-1\leq N$, using that $j'=k-k'$, that $k'$ only depends on $n$ and that
  $\theta_0^{-k-j}\nu P_{k+j}\11_E=\theta_0^{-N-i}\nu P_{N+i}\11_E$ since $N+i-k-j\in t\mathbb{Z}$,
  \begin{multline*}
    \sum_{m=pt-i}^{(p+1)t-i-1}\theta_0^{N-n-k-k'-j-j'}\nu P_{k+j}\11_E\,\nu P_{k'+j'}(\bar{f}\eta_{t-k})\
    \EE_{\restriction{\mu}{A_i}}\left(\bar{f}(X_n)\eta_{t-k'}(X_n)\right) \\
    =\theta_0^{-n-i}\,\nu P_{N+i}\11_E\,\restriction{\mu}{A_i} P_n(\bar{f}\eta_{t-k'})\sum_{m=pt-i}^{(p+1)t-i-1}\theta_0^{-k}\nu
    P_{k}(\bar{f}\eta_{t-k})=0,
  \end{multline*}
  since $\nu_\text{QE}(\bar{f})=0$.
  
  Combining the last two inequalities and using Theorem~\ref{thm:eta-periodic}, we deduce that, for all $i\in\{0,\ldots,t-1\}$,
  \begin{multline*}
    \frac{2}{(N+1)^2}\sum_{0\leq n< m\leq N}\EE_{\restriction{\mu}{A_i}}\left[\bar{f}(X_n)\bar{f}(X_m)\11_{N<\tau_\d}\right] \\
    \begin{aligned}
      & \leq\frac{C}{(N+1)^2}\sum_{0\leq n< m\leq N}
      \theta_0^{N-n-k-k'-j-j'}\nu P_{k+j}\11_E\,\nu P_{k'+j'}(\bar{f}\eta_{t-k})\
        \restriction{\mu}{A_i} P_n(\bar{f}\eta_{t-k'}) \\
      & +\frac{C\|f\|_\infty^2}{(N+1)^2}\sum_{0\leq n< m\leq N}
      \alpha^{N-m}\theta_0^{N-m}\restriction{\mu}{A_i} P_{m+t-k}
        V+\alpha^{m-n}\theta_0^{N-n-k-j}\restriction{\mu}{A_i} P_{n+t-k'} V \\
      & \leq \frac{C t N\,\|f\|_\infty^2\,\theta_0^N }{(N+1)^2}\,\sup_{0\leq\ell< 2t}(\nu
        P_\ell\11_E)\,\nu(V)\,\restriction{\mu}{A_i}(\eta_{t-i}) \\
      & +\frac{C\|f\|_\infty^2\, \theta_0^N}{(N+1)^2}\,\restriction{\mu}{A_i}P_{t-i} V\,\sum_{0\leq n<m\leq
        N}\left(\alpha^{N-m}+\alpha^{m-n}\right),
    \end{aligned}
  \end{multline*}
  where we used in the last inequality that $\theta_0^{-m}\restriction{\mu}{A_i} P_{m+t-k}V\leq C \restriction{\mu}{A_i} P_{t-i} V$ because
  $m+t-k=\ell t -i$ for some integer $\ell$ and $\theta_0^{\ell t}Q_\ell V(x)\leq C V(x)$ for all $x\in A_0$
  by~\eqref{eq:lemme-note}, and similarly for $\theta_0^{-n}\restriction{\mu}{A_i} P_{n+t-k'} V$. Using that
  \[
    \sum_{0\leq n<m\leq
      N}\left(\alpha^{N-m}+\alpha^{m-n}\right)\leq \frac{2N}{1-\alpha},
  \]
  we have proved that
  \[
   \frac{2}{(N+1)^2}\sum_{0\leq n< m\leq N}\EE_{\mu}\left[\bar{f}(X_n)\bar{f}(X_m)\11_{N<\tau_\d}\right] \leq\frac{C(\mu)\,\|f\|^2_\infty\,\theta_0^N}{N+1},
  \]
  where the constant $C(\mu)$ depends on $\mu$.

  Now, it follows from Theorem~\ref{thm:eta-periodic} that, for $N$ large enough, given $i\in\{0,\ldots,t-1\}$ such that
  $\restriction{\mu}{A_i}(\eta_{t-i})>0$,
  \[
    \PP_\mu(N<\tau_\d)\geq \PP_{\restriction{\mu}{A_i}}(N<\tau_\d)\geq\frac{1}{2}\restriction{\mu}{A_i}(\eta_{t-i})\theta_0^{N-i-\ell}\nu P_{i+\ell}
    \11_E
  \]
  where $\ell\in\{0,\ldots,t-1\}$ is such that $N-\ell\in t\mathbb{Z}$. Hence, there exists a constant $c(\mu)>0$ depending only on
  $\mu$ such that, for all $N$,
  $\PP_{\mu}(N<\tau_\d) \geq c(\mu)\theta_0^{N}$. Therefore,~\eqref{eq:QED-periodic-squared} follows from~\eqref{eq:but} and
  Corollary~\ref{cor:QED-periodic} is proved.
\end{proof}


\bibliographystyle{abbrv}
\bibliography{biblio-denis,biblio-math}

\end{document}